\providecommand{\U}[1]{\protect\rule{.1in}{.1in}}
\newtheorem{theorem}{Theorem}[section]
\newtheorem*{acknowledgement*}{Acknowledgements}
\newtheorem{lemma}[theorem]{Lemma}
\newtheorem{proposition}[theorem]{Proposition}
\newtheorem{remark}[theorem]{Remark}
\def\<{\left\langle}
\def\>{\right\rangle}
\newcommand{\matrice}{\begin{pmatrix}}
\newcommand{\ok}{\end{pmatrix}}
\newcommand{\dmatrice}{\begin{vmatrix}}
\newcommand{\dok}{\end{vmatrix}}
\def\<{\left\langle}
\def\>{\right\rangle}
\begin{document}
\title[Poincar\'e inequality and topological rigidity]{Poincar\'e inequality and topological rigidity of translators and self-expanders for the mean curvature flow}

\thanks{D. Impera is partially supported by INdAM-GNSAGA and M. Rimoldi is partially supported by INdAM-GNAMPA. The authors acknowledge partial support by the PRIN 2022 project ``Real and Complex Manifolds: Geometry and Holomorphic Dynamics'' - 2022AP8HZ9}

\subjclass[2020]{53C42, 53C21}

\keywords{Weighted manifolds, Translators, Self-expanders, Poincar\`e inequality, topological rigidity, genus, number of ends, index estimates}

\author[Debora Impera]{Debora Impera}
\address[Debora Impera]{Dipartimento di Scienze Matematiche ``Giuseppe Luigi Lagrange", Politecnico di Torino, Corso Duca degli Abruzzi, 24, Torino, Italy, I-10129}
\email{debora.impera@polito.it}

\author[Michele Rimoldi]{Michele Rimoldi}
\address[Michele Rimoldi]{Dipartimento di Scienze Matematiche ``Giuseppe Luigi Lagrange", Politecnico di Torino, Corso Duca degli Abruzzi, 24, Torino, Italy, I-10129}
\email{michele.rimoldi@polito.it}


\begin{abstract}
We prove an abstract structure theorem for weighted manifolds supporting a weighted $f$-Poincar\'e inequality and whose ends satisfy a suitable non-integrability condition. We then study how our arguments can be used to obtain full topological control on two important classes of hypersurfaces of the Euclidean space, namely translators and self-expanders for the mean curvature flow, under either stability or curvature asumptions. 
As an important intermediate step in order to get our results we get the validity of a Poincar\'e inequality with respect to the natural weighted measure on any translator and we prove that any end of a translator must have infinite weighted volume. Similar tools can be obtained for properly immersed self-expanders permitting to get topological rigidity under curvature assumptions.
\end{abstract}

\maketitle

\section{Introduction and main results}\label{Sect_Intro}

\subsection{Main definitions} Translators for the mean curvature flow give rise to a special class of eternal solutions for the flow that, besides having their own intrinsic interest, are models of Type II singularities when starting from an initial mean convex closed hypersurface. An isometrically immersed complete (orientable) hypersurface of the Euclidean space $x:\Sigma^m\to(\mathbb{R}^{m+1}, \<\,,\,\>)$ is said to be a translator for the mean curvature flow if its mean curvature vector field satisfies the equation
\begin{equation}\label{T}
\mathbf{H}=\bar{V}^{\bot}
\end{equation}
for some parallel unit length vector field $\bar{V}$ in $\mathbb{R}^{m+1}$, where $(\cdot)^{\bot}$ denotes the projection on the normal bundle of $\Sigma$.  Letting $\left\{\bar{E}_{1},\ldots,\bar{E}_{m+1}\right\}$ be the standard orthonormal basis of $\mathbb{R}^{m+1}$, we assume from now on, without loss of generality, that $\bar{V}=\bar{E}_{m+1}$ in \eqref{T}. 

One can actually look at these objects also from a variational point of view. Indeed, it is by now well-known that equation \eqref{T} turns out to be the Euler-Lagrange equation for the weighted volume functional 
\[
\ \mathrm{vol}_{f}(\Sigma)=\int_{\Sigma}e^{-f}d\mathrm{vol}_{\Sigma},
\]
when choosing $f=-\<x,\bar{E}_{m+1}\>=-x_{m+1}$. This variational characterization permits to talk about stability properties and these latters are understood by looking at the second variation for the weighted area functional. In particular, since the Bakry-\`Emery Ricci curvature $\overline{\mathrm{Ric}}_{f}\doteq \overline{\mathrm{Ric}}+\overline{\mathrm{Hess}}(f)$ of the ambient Euclidean space identically vanishes, stability properties are taken into account by spectral properties of the weighted Jacobi operator of $\Sigma$ given by
\[
\ L_{f}\doteq -\Delta_{f}-\|A\|^{2},
\]
where $\Delta_{f}\doteq \Delta-\langle\nabla f, \nabla\rangle$, $\Delta\doteq\mathrm{div}(\nabla)$; for more details see e.g. \cite{IR_fMin}. It can be easily proved, by an adaptation of a result by Fischer-Colbrie and Schoen, \cite{FCS}, that translators with mean curvature that does not change sign (i.e. those which are graphical w.r.t. the translating direction $\bar{E}_{m+1}$) are either $f$-stable or have identically vanishing mean curvature (see \cite{ImperaRimoldi_Transl}). In this latter case they actually split as the product of a line parallel to the translating direction and a stable minimal hypersurface in the orthogonal complement of this line. More generally, a similar reasoning actually yields that translators which are graphical w.r.t. to any direction are $f$-stable. 

In recent years a great effort has been made on one hand to construct a wide panorama of examples and on the other hand to obtain classification results for these objects. Indeed plenty of examples of translators were constructed by various authors, and these go far beyond the graphical setting and the trivial topological structure; see e.g. \cite{CSS}, \cite{DDPN}, \cite{N1}, \cite{HMW22}, \cite{HMW_Annuli}, \cite{HMW_Scherk} and e.g. \cite{GMM22} for an almost updated survey on known examples in the $2$-dimensional case. Moreover, exploiting fundamental results in \cite{W} and \cite{SX}, it was recently reached the full classification of 2-dimensional vertical graphical translators, \cite{HIMW}, and of $2$-dimensional semigraphical translators, \cite{HMW22}. 

\medskip

Another fundamental class of hypersurfaces to be understood is that of self-expanders for mean curvature flow. These objects are expected to describe both the asymptotic long term behaviour of the flow and the local structure after singularities in the very short time, \cite{EckerHuisken}, \cite{Stavrou} \cite{AIC}. An isometrically immersed complete (orientable) hypersurface of the Euclidean space $x:\Sigma^m\to(\mathbb{R}^{m+1}, \<\,,\,\>)$ is said to be a self-expander for the mean curvature flow if its mean curvature vector field satisfies the equation
\begin{equation}\label{SE}
\mathbf{H}=x^{\bot}.
\end{equation}
Lately there was much activity in literature for studying geometric properties of self-expanders, with a particular focus on those which are asimptotically conical. For some key references and recent works on self-expanders see e.g. \cite{I}, \cite{Ding} \cite{BW1}, \cite{BW2}, \cite{Helmensdorfer}, \cite{Smoczyk}, \cite{FMG}, \cite{ChengZhou_Selfexp}, \cite{AncariCheng_GeoDed}.

It is well-known that also self-expanders have a variational interpretation, being critical points of the weighted volume functional $\mathrm{vol}_{f}$ when choosing $f=-\frac{|x|^2}{2}$. Some results  about the spectrum of the corresponding weighted Laplacian $\Delta_f$ where proved in \cite{ChengZhou_Selfexp}. In particular, in \cite{ChengZhou_Selfexp} it is obtained the discreteness of the spectrum for properly immersed self-expanders and a sharp universal lower bound for the bottom of the spectrum. 
\medskip

The present paper is devoted to the study of topological rigidity properties of translators and self-expanders for the mean curvature flow in the Euclidean space under either stability or curvature assumptions. From an abstract viewpoint the key ideas in order to get our results, are contained in the proof of an abstract theorem of independent interest which we state in the general setting of weighted manifolds. 

\subsection {An abstract structure theorem for weighted manifolds supporting a weighted Poincar\'e inequality} Recall that, letting $(M, g)$ be a complete Riemannian manifold and given a smooth function $f$ on $M$, we can consider the weighted manifold $M_{f}=(M, g, e^{-f}d\mathrm{vol})$. Associated to the weighted manifold $M_f$ there is a natural divergence form second order diffusion operator, the $f$-Laplacian, defined on $u$ as
\[
\ \Delta_f u=e^{f}\mathrm{div}(e^{-f}\nabla u)=\Delta u- g(\nabla f, \nabla u).
\]
This is clearly symmetric on the space $L^{2}(M_f)$ of square integrable function w.r.t. the weighted measure. The geometry of weighted manifolds is visible in the weighted metric structure, i.e. in the weighted measure of (intrinsic) metric objects, and it is controlled by suitable concepts of curvature adapted to the density of the measure. An important generalization of the Ricci tensor in this setting is the so-called Bakry-\'Emery Ricci tensor, defined as
\[
\ \mathrm{Ric}_{f}=\mathrm{Ric}+\mathrm{Hess}(f).
\]
\medskip

Structure theorems for complete Riemannian manifolds satisfying both a Ricci curvature lower bound and a (weigthed) Poincar\'e inequality were classically investigated starting form papers by Li and Wang; \cite{LiWang_1} \cite{LiWang_Poincare}. Usually, these results permit to get geometric information about connected components at infinity (i.e. ends). Moreover, note that in presence of a genuine Poincar\'e inequality, if all ends have infinite volume, a key role in controlling the topology at infinity is played by the first de Rham's cohomology group with compact support; see \cite[Proposition 2.4]{Carron_LectNotes}. Here we combine these ideas, in the setting of weighted manifolds, getting that when a weighted manifold $M_f$ supports a weighted $f$-Poincar\'e inequality and its ends satisfy a certain weighted non-integrability condition, then a suitable lower bound on the Bakry-\'Emery Ricci curvature implies topological rigidity. This is the content of the following

\begin{theorem}\label{MainA}
Let $(M,g,e^{-f}d\mathrm{vol})$ be a complete weighted manifold satisfying the weighted $f$-Poincar\'e inequality
\begin{equation}\label{eq_weightedpoinc}
\int_M\rho u^2e^{-f}d\mathrm{vol}\leq \int_M |\nabla u|^2e^{-f}d\mathrm{vol},\qquad \forall u\in\mathrm{C}^\infty_c(M),
\end{equation}
where $\rho$ is a positive smooth function on $M$ satisfying, on any end $E$ of $M$,
\begin{equation}\label{Nonintrho}
\ \int_{E}\rho e^{-f}d\mathrm{vol}=+\infty.
\end{equation}
Assume that
\[\mathrm{Ric}_f\geq-a(x),\]
for some smooth function $a$ satisfying $a(x)\leq\frac{\rho(x)}{\delta}$, for some $\delta>1$. Then:
\begin{enumerate}
\item  $M$ is connected at infinity (that is, $M$ has only one end);
\item $M$ admits no codimension one cycle that does not disconnect $M$. In particular, if $\mathrm{dim}(M)=2$ then $M$ has genus zero.
\end{enumerate}
\end{theorem}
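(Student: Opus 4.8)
The plan is to prove item (1) by a Bochner-type estimate applied to a bounded $f$-harmonic function separating two ends, and then to reduce item (2) to item (1) by passing to a suitable $\mathbb{Z}$-cover. Two preliminary remarks. First, $M$ must be noncompact, since otherwise the constant function would lie in $\mathrm{C}^\infty_c(M)$ and \eqref{eq_weightedpoinc} would force $\int_M\rho\,e^{-f}d\mathrm{vol}\leq 0$, contradicting $\rho>0$. Second, \eqref{Nonintrho} forces every end $E$ of $M$ to be $f$-nonparabolic: if $E$ were $f$-parabolic there would exist $\phi_i\in\mathrm{Lip}_c(\overline E)$, equal to $1$ on $\partial E$, with $0\leq\phi_i\leq 1$, $\phi_i\to 1$ locally uniformly, and $\int_E|\nabla\phi_i|^2e^{-f}d\mathrm{vol}\to 0$; gluing each $\phi_i$ across $\partial E$ to a fixed cutoff $\eta\in\mathrm{C}^\infty_c(M)$ equal to $1$ near $\partial E$, the functions $\max(\phi_i,\eta)$ extended by $\eta$ are admissible in \eqref{eq_weightedpoinc} and, using $|\nabla\max(\phi_i,\eta)|\leq\max(|\nabla\phi_i|,|\nabla\eta|)$, show that $\int_E\rho\,\phi_i^2e^{-f}d\mathrm{vol}$ is bounded uniformly in $i$; by Fatou's lemma $\int_E\rho\,e^{-f}d\mathrm{vol}<\infty$, contradicting \eqref{Nonintrho}.

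For part (1), suppose $M$ has at least two ends. Since every end is $f$-nonparabolic, the Li--Tam construction — which carries over to the weighted Dirichlet energy $u\mapsto\int_M|\nabla u|^2e^{-f}d\mathrm{vol}$, cf. \cite{LiWang_1,LiWang_Poincare} — produces a non-constant bounded $f$-harmonic function $u$ with $\int_M|\nabla u|^2e^{-f}d\mathrm{vol}<\infty$. Put $\psi:=|\nabla u|$. The weighted Bochner formula, $\Delta_f u=0$, and the Kato inequality $\bigl|\nabla|\nabla u|\bigr|\leq|\mathrm{Hess}\,u|$ give, weakly on $M$, the differential inequality $\Delta_f\psi\geq -a\,\psi\geq -\tfrac{\rho}{\delta}\,\psi$. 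Testing this against $\phi^2\psi$ and \eqref{eq_weightedpoinc} against $\phi\psi$ for $\phi\in\mathrm{C}^\infty_c(M)$ and combining, the gradient cross-terms cancel and one obtains
\[
\Bigl(1-\tfrac1\delta\Bigr)\int_M\rho\,\phi^2\psi^2\,e^{-f}d\mathrm{vol}\ \leq\ \int_M\psi^2|\nabla\phi|^2\,e^{-f}d\mathrm{vol}.
\]
Choosing $\phi$ to be the standard cutoffs equal to $1$ on $B_R$ and supported in $B_{2R}$ with $|\nabla\phi|\leq C$, the right-hand side is at most $C^2\int_{M\setminus B_R}\psi^2e^{-f}d\mathrm{vol}\to 0$ because the weighted energy is finite; since $\delta>1$ and $\rho>0$ this yields $\psi\equiv 0$, i.e. $u$ is constant — a contradiction. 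Hence $M$ has exactly one end.

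For part (2), let $\Sigma$ be a smooth compact embedded two-sided hypersurface with $M\setminus\Sigma$ connected; since each connected component of such a $\Sigma$ is again non-disconnecting, we may assume $\Sigma$ connected. The intersection number with $\Sigma$ defines an epimorphism $\pi_1(M)\twoheadrightarrow\mathbb{Z}$; let $\pi\colon\widehat M\to M$ be the associated connected $\mathbb{Z}$-cover, with the pulled-back metric, weight $\widehat f:=f\circ\pi$ and $\widehat\rho:=\rho\circ\pi$. Then $\mathrm{Ric}_{\widehat f}=\mathrm{Ric}_f\circ\pi\geq -(a\circ\pi)$ with $a\circ\pi\leq\widehat\rho/\delta$; the manifold $\widehat M$ is obtained by gluing a $\mathbb{Z}$-indexed chain of copies of $M\setminus\Sigma$ along lifts of $\Sigma$, hence has at least two ends; and every end of $\widehat M$ either is a lift of an end $E$ of $M$ or contains a full copy of $M\setminus\Sigma$, so it has infinite $\widehat\rho$-volume, since $\int_{M\setminus\Sigma}\rho\,e^{-f}d\mathrm{vol}=\int_M\rho\,e^{-f}d\mathrm{vol}\geq\int_E\rho\,e^{-f}d\mathrm{vol}=+\infty$ by \eqref{Nonintrho}. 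Finally \eqref{eq_weightedpoinc} lifts: for $\widehat u\in\mathrm{C}^\infty_c(\widehat M)$ and $T$ a generator of the deck group, $W:=\sum_{k\in\mathbb{Z}}(\widehat u\circ T^k)^2$ is deck-invariant with compact projection and descends to $\overline W\in\mathrm{Lip}_c(M)$; one has $\int_M\rho\,\overline W\,e^{-f}d\mathrm{vol}=\int_{\widehat M}\widehat\rho\,\widehat u^2\,e^{-\widehat f}d\mathrm{vol}$, while a Cauchy--Schwarz estimate gives $\int_M|\nabla\overline{W}^{1/2}|^2e^{-f}d\mathrm{vol}\leq\int_{\widehat M}|\nabla\widehat u|^2e^{-\widehat f}d\mathrm{vol}$, so applying \eqref{eq_weightedpoinc} to $\overline{W}^{1/2}$ yields the weighted Poincar\'e inequality on $\widehat M$. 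Thus $\widehat M$ satisfies all hypotheses of part (1) and has only one end, contradicting the above. Hence $M$ admits no non-disconnecting codimension-one cycle; and as a surface of positive genus contains a two-sided simple closed curve that does not disconnect it, $\dim M=2$ forces genus zero.

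The main obstacle is the nonparabolicity statement for the ends together with the accompanying construction: one must recognize that \eqref{Nonintrho} is precisely the hypothesis ruling out $f$-parabolic ends, and that the classical Li--Tam production of a finite-energy bounded separating harmonic function applies unchanged to the weighted Laplacian $\Delta_f$. Granting this, the Bochner--Kato computation with its cross-term cancellation, the cutoff passage to the limit, and the verification that the hypotheses are inherited by the $\mathbb{Z}$-cover are all routine.
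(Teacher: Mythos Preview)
Your proof is correct and takes a genuinely different route from the paper's. The paper treats items (1) and (2) simultaneously: it proves an injection $H^1_c(M)\hookrightarrow\mathcal H^1_f(M)$ into the space of $L^{2,f}$ $f$-harmonic $1$-forms (via a weighted Hodge decomposition, using \eqref{Nonintrho} to show that a compactly supported closed form which is $L^{2,f}$-exact is in fact compactly exact); since either two ends or a nonseparating codimension-one cycle forces $H^1_c(M)\neq 0$, one obtains a nontrivial $\omega\in\mathcal H^1_f(M)$, and then a Liouville-type vanishing lemma (the paper's Lemma \ref{Liouville}) applied to $u=|\omega|$ forces $|\omega|$ to be a nonzero constant, contradicting the infinite $f$-volume of $M$. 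By contrast, you (i) read \eqref{Nonintrho} as a nonparabolicity statement for the ends and invoke the weighted Li--Tam construction to get a finite-energy bounded $f$-harmonic function (whose differential is, of course, an element of $\mathcal H^1_f(M)$, so the Bochner--Kato step coincides with the paper's), and (ii) reduce the codimension-one cycle case to (1) by passing to the $\mathbb Z$-cover. The trade-off: the paper's $H^1_c$ route is more uniform and avoids the verification that the Poincar\'e inequality and \eqref{Nonintrho} lift to the cover, while your route sidesteps the $L^{2,f}$-Hodge machinery and the abstract vanishing lemma in favour of a direct integral inequality. Both arguments give the same conclusion under $\delta>1$; note however that the paper's formulation via Lemma \ref{Liouville} also yields rigidity information in the borderline case $\delta=1$, which is precisely what is exploited in the subsequent applications to translators and self-expanders.
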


\begin{remark}
\rm{Besides the fact that we are working in the setting of weighted manifolds, there are two main novelties in the last statement we would like to stress:
\begin{itemize}
\item[(i)] Differently from \cite{Carron_LectNotes} we are assuming a truly weighted Poincar\`e inequality \eqref{eq_weightedpoinc} with non-necessarily constant $\rho$. Assumption \eqref{Nonintrho} is the suitable replacement in this setting for the request that all ends have infinite volume. 
\item[(ii)] Passing again through the first de Rham's cohomology group with compact support, thanks to point (2) of Theorem \ref{MainA}, we can actually get a full control on the topology of the weighted manifold. The proof of this fact exploits again ideas of Carron; see \cite{Carron_LectNotes} and \cite[Proposition 3.3]{KS} for a precise statement.
\end{itemize}}
\end{remark}

\subsection{Topological rigidity for translators and self-expanders} In the second part of the paper we apply the ideas of the proof of Theorem \ref{MainA} to obtain analogous conclusions for translators and self-expanders for the mean curvature flow. These two classes of hypersurfaces turn out to fit very well the theory. Indeed, we can prove the following two results.
\begin{theorem}\label{Thmfpoincaretranslato}
Let $x:\Sigma^{m\geq 2}\to\mathbb{R}^{m+1}$ be a translator for the mean curvature flow and let $f=-x_{m+1}$. Then:
\begin{itemize}
\item[(a)] the following $f$-Poincar\'e inequality holds
\begin{equation*}
\frac{1}{4}\int_\Sigma \varphi^2 e^{-f}d\mathrm{vol}_{\Sigma}\leq \frac{1}{4}\int_\Sigma (1+H^2) \varphi^2 e^{-f}d\mathrm{vol}_{\Sigma}\leq \int_\Sigma |\nabla \varphi|^2 e^{-f}d\mathrm{vol}_{\Sigma},
\end{equation*}
for any $\varphi\in C^{\infty}_c(\Sigma)$;
\item[(b)] every end of $\Sigma$ has infinite $f$-volume.
\end{itemize}
\end{theorem}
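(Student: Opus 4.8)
The starting point I would use consists of two pointwise identities valid on any translator with $f=-x_{m+1}$. Writing $\bar{E}_{m+1}^{\top}$ and $\bar{E}_{m+1}^{\bot}$ for the tangential and normal components of $\bar{E}_{m+1}$ along $\Sigma$, the equation $\mathbf{H}=\bar{E}_{m+1}^{\bot}$ together with $\Delta_{\Sigma}x=\mathbf{H}$ gives $\Delta_{\Sigma}x_{m+1}=\langle\mathbf{H},\bar{E}_{m+1}\rangle=|\bar{E}_{m+1}^{\bot}|^{2}=H^{2}$, while $\nabla x_{m+1}=\bar{E}_{m+1}^{\top}$ gives $|\nabla x_{m+1}|^{2}=1-|\bar{E}_{m+1}^{\bot}|^{2}=1-H^{2}$. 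Since $\nabla f=-\nabla x_{m+1}$, adding these computations yields the key facts
\[
\Delta_{f}x_{m+1}=\Delta_{\Sigma}x_{m+1}-\langle\nabla f,\nabla x_{m+1}\rangle=H^{2}+|\nabla x_{m+1}|^{2}=1,\qquad |\nabla x_{m+1}|^{2}=1-H^{2}\le 1 .
\]
These are the only structural inputs needed.

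For part (a) I would test $\Delta_{f}x_{m+1}=1$ against $\varphi^{2}e^{-f}$ with $\varphi\in C^{\infty}_{c}(\Sigma)$ and integrate by parts with respect to the weighted measure:
\[
\int_{\Sigma}\varphi^{2}e^{-f}\,d\mathrm{vol}_{\Sigma}=\int_{\Sigma}\varphi^{2}\,\Delta_{f}x_{m+1}\,e^{-f}\,d\mathrm{vol}_{\Sigma}=-2\int_{\Sigma}\varphi\,\langle\nabla\varphi,\nabla x_{m+1}\rangle\,e^{-f}\,d\mathrm{vol}_{\Sigma}.
\]
Bounding $2|\varphi|\,|\nabla\varphi|\,|\nabla x_{m+1}|\le\tfrac12\varphi^{2}|\nabla x_{m+1}|^{2}+2|\nabla\varphi|^{2}=\tfrac12(1-H^{2})\varphi^{2}+2|\nabla\varphi|^{2}$ and absorbing the $\varphi^{2}$--term on the left-hand side gives $\tfrac12\int_{\Sigma}(1+H^{2})\varphi^{2}e^{-f}\le 2\int_{\Sigma}|\nabla\varphi|^{2}e^{-f}$, which is exactly the asserted estimate; the first inequality in (a) is just $H^{2}\ge 0$. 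The particular choice of Young constant ($\tfrac12$ against $2$) is what produces the factor $\tfrac14$ and the coefficient $1+H^{2}$.

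For part (b) I would argue by contradiction. If an end $E$ has $\mathrm{vol}_{f}(E)<\infty$, then $E$ is $f$-parabolic: Lipschitz cutoffs $\phi_{j}$ equal to $1$ on $B_{j}\cap E$, supported in $B_{2j}$, with $|\nabla\phi_{j}|\le C/j$, satisfy $\int_{E}|\nabla\phi_{j}|^{2}e^{-f}\le C^{2}j^{-2}\mathrm{vol}_{f}(E)\to 0$. Testing $\Delta_{f}x_{m+1}=1$ against $\phi_{j}^{2}e^{-f}$ on $E$, integrating by parts, and using $|\nabla x_{m+1}|\le 1$ together with $\int_{E}\phi_{j}^{2}e^{-f}\le\mathrm{vol}_{f}(E)$ to control the cross term $2\int_{E}\phi_{j}\langle\nabla\phi_{j},\nabla x_{m+1}\rangle e^{-f}$ by $2\bigl(\mathrm{vol}_{f}(E)\int_{E}|\nabla\phi_{j}|^{2}e^{-f}\bigr)^{1/2}\to 0$, one finds in the limit that $\mathrm{vol}_{f}(E)$ equals the finite weighted flux of $\nabla x_{m+1}$ through $\partial E$. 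On the other hand, the divergence theorem applied to $\Delta_{f}x_{m+1}=1$ on $E\cap B_{r}$, again with $|\nabla x_{m+1}|\le 1$, gives $\mathrm{vol}_{f}(E\cap B_{r})\le\mathrm{Area}_{f}(\partial E)+\mathrm{Area}_{f}(E\cap\partial B_{r})$, and the co-area formula turns this into a differential inequality forcing $\mathrm{vol}_{f}(E\cap B_{r})$ to grow exponentially as soon as it exceeds the flux through $\partial E$ (in particular $\mathrm{vol}_{f}(B_{r})\gtrsim e^{r}$ globally). To reach a contradiction one must exclude the complementary regime, i.e. the possibility that $x_{m+1}\to-\infty$ fast enough along $E$ to keep $\mathrm{vol}_{f}(E)=\int_{E}e^{x_{m+1}}$ finite; I would do this by exploiting that $x_{m+1}$ is strictly $f$-subharmonic on $E$ ($\Delta_{f}x_{m+1}=1>0$) and analyzing the weighted flux across the level sets $\{x_{m+1}=t\}\cap E$ — itself computable from $\Delta_{f}x_{m+1}=1$ via the co-area formula — to show this cannot happen.

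The step I expect to be the main obstacle is precisely this last one in (b): the energy and flux identities above are all, on their own, consistent with a ``descending'' end of finite weighted volume, so one genuinely needs the strict sign $\Delta_{f}x_{m+1}=1>0$ (and not merely $\Delta_{f}x_{m+1}\ge 0$) together with a careful control of $x_{m+1}$ near the infinity of $E$ to pin down the boundary flux through $\partial E$. Everything else reduces to the two identities of the first paragraph and routine integrations by parts.
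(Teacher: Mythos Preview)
Your argument for part~(a) is correct but takes a different, more elementary route than the paper. The paper first proves an abstract weighted Li--Wang criterion (their Lemma~3.2): if there is a positive function $g$ with $\Delta_f g\le -\rho g$, then the $f$-Poincar\'e inequality with weight $\rho$ holds. It then applies this with $g=e^{-x_{m+1}/2}$, computing from the same two identities you use that $\Delta_f g=-\tfrac14(1+H^2)g$. Your direct integration-by-parts of $\Delta_f x_{m+1}=1$ against $\varphi^2 e^{-f}$ together with the Young split $2ab\le\tfrac12 a^2+2b^2$ bypasses the abstract lemma entirely and is arguably simpler; the paper's approach has the compensating advantage that the same lemma immediately handles self-expanders (there with $g=e^{f}$).

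Part~(b), however, is not finished. You yourself flag the gap: the differential inequality $w'(r)\ge w(r)-\text{(boundary flux)}$ only forces exponential growth once $w(r)$ exceeds the constant, and under your contradiction hypothesis you have shown precisely that $\mathrm{vol}_f(E)=\text{flux}$, so $w(r)=\mathrm{vol}_f(E\cap B_r)$ stays \emph{below} that constant for all finite $r$ and the inequality is vacuous. Your proposed resolution via level sets of $x_{m+1}$ is too vague to count as an argument: the coarea identity on $\{x_{m+1}=t\}$ is again consistent with a finite-volume descending end and does not by itself pin down the flux. The paper's proof does not argue by contradiction and does not pass through $f$-parabolicity or the limiting identity $\mathrm{vol}_f(E)=\text{flux}$; it simply applies the divergence theorem to $\varphi^2 e^{x_{m+1}/2}\nabla e^{x_{m+1}/2}=\tfrac12\varphi^2 e^{-f}\nabla x_{m+1}$ on $E\cap B_{R+\varepsilon}$ with a sharp radial cutoff, lets $\varepsilon\to 0$, and integrates the resulting ODE comparison directly. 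Up to packaging this is the same differential inequality you wrote, so even in the paper's formulation the crucial point is that $\mathrm{vol}_f(E_R)$ eventually exceeds the fixed boundary term; you should isolate and justify that step explicitly rather than leave it as a sketch.
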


\begin{theorem}\label{ThmExpTools}
Let $x:\Sigma^{m\geq 2}\to\mathbb{R}^{m+1}$ be a self-expander of the mean curvature flow and let $f=-|x|^2/2$. Then:
\begin{itemize}
\item[(a)] the following $f$-Poincar\'e inequality holds
\begin{equation*}
m\int_\Sigma \varphi^2 e^{-f}d\mathrm{vol}_{\Sigma}\leq \int_\Sigma (m+H^2) \varphi^2 e^{-f}d\mathrm{vol}_{\Sigma}\leq \int_\Sigma |\nabla \varphi|^2 e^{-f}d\mathrm{vol}_{\Sigma},
\end{equation*}
for any $\varphi\in C^{\infty}_c(\Sigma)$;
\item[(b)] if $x$ is a proper immersion then every end of $\Sigma$ has infinite $f$-volume.
\end{itemize}
\end{theorem}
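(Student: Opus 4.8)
The plan is to obtain both items exactly as in the translator case of Theorem~\ref{Thmfpoincaretranslato}, namely from elementary weighted integration‑by‑parts identities for the position vector $x$ restricted to $\Sigma$.

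For (a) the starting point is the pair of pointwise identities, valid on a self‑expander with $f=-|x|^2/2$,
\[
\Delta_f\!\left(\tfrac{|x|^2}{2}\right)=|x|^2+m,\qquad \left|\nabla\!\left(\tfrac{|x|^2}{2}\right)\right|^2=|x|^2-H^2 .
\]
These follow from the Beltrami formula $\Delta_\Sigma x=\mathbf H=x^{\perp}$ together with $\nabla(|x|^2/2)=x^{\top}$, $\sum_a|\nabla_{e_a}x|^2=m$ and $\langle x,x^{\perp}\rangle=|x^{\perp}|^2=H^2$; indeed $\Delta_f(|x|^2/2)=\Delta_\Sigma(|x|^2/2)-\langle\nabla f,\nabla(|x|^2/2)\rangle=(H^2+m)+|x^{\top}|^2=|x|^2+m$. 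Given $\varphi\in C^{\infty}_c(\Sigma)$, I would test the first identity against $\varphi^2e^{-f}$: integrating by parts (legitimate since $\varphi$ has compact support) and using $2ab\le a^2+b^2$,
\[
\int_\Sigma(|x|^2+m)\varphi^2e^{-f}\,d\mathrm{vol}_\Sigma=-2\int_\Sigma \varphi\,\langle\nabla(\tfrac{|x|^2}{2}),\nabla\varphi\rangle e^{-f}\,d\mathrm{vol}_\Sigma\le\int_\Sigma\varphi^2\big|\nabla(\tfrac{|x|^2}{2})\big|^2e^{-f}\,d\mathrm{vol}_\Sigma+\int_\Sigma|\nabla\varphi|^2e^{-f}\,d\mathrm{vol}_\Sigma .
\]
Substituting the second identity and cancelling the two $\int_\Sigma\varphi^2|x|^2e^{-f}$ terms leaves precisely $\int_\Sigma(m+H^2)\varphi^2e^{-f}\le\int_\Sigma|\nabla\varphi|^2e^{-f}$, and the first inequality in (a) is then immediate from $m+H^2\ge m$.

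For (b) properness is what enters. When $x$ is a proper immersion, $\psi:=|x|^2$ is a proper exhaustion function on $\Sigma$, and the same computation gives $\Delta_f\psi=2|x|^2+2m\ge 2m>0$ everywhere, while $|\nabla\psi|=2|x^{\top}|\le 2|x|=2\sqrt{\psi}$. Fix a large regular value $s_0$ of $\psi$ and let $E$ be an unbounded connected component of $\{\psi>s_0\}$ (for $s_0$ large such a component represents a prescribed end of $\Sigma$). For a regular value $t>s_0$ set $E_t=E\cap\{\psi<t\}$ — precompact, by properness — $\Gamma_t=E\cap\{\psi=t\}$, and $W(t)=\mathrm{vol}_f(E_t)$, $P(t)=\int_{\Gamma_t}e^{-f}\,d\sigma$. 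Since $\partial E$ lies in the level set $\{\psi=s_0\}$, the unit normal of $\partial E_t$ pointing out of $E_t$ equals $-\nabla\psi/|\nabla\psi|$ there, so the weighted divergence theorem applied to $e^{-f}\nabla\psi$ on $E_t$ gives
\[
\int_{\Gamma_t}|\nabla\psi|e^{-f}\,d\sigma=\int_{E_t}\Delta_f\psi\,e^{-f}\,d\mathrm{vol}_\Sigma+\int_{\partial E}|\nabla\psi|e^{-f}\,d\sigma\ \ge\ 2m\,W(t),
\]
using $\Delta_f\psi\ge 2m$ and the nonnegativity of the last term. Since $|\nabla\psi|\le 2\sqrt{t}$ on $\Gamma_t$, the left‑hand side is $\le 2\sqrt{t}\,P(t)$, whence $P(t)\ge (m/\sqrt{t})\,W(t)$; and the co‑area formula gives, for a.e.\ $t>s_0$, $W'(t)=\int_{\Gamma_t}|\nabla\psi|^{-1}e^{-f}\,d\sigma\ge(2\sqrt{t})^{-1}P(t)\ge(m/2t)W(t)$. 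Integrating the differential inequality $W'/W\ge m/(2t)$ yields $W(t)\ge W(t_1)(t/t_1)^{m/2}\to+\infty$, so $\mathrm{vol}_f(E)=+\infty$; as $E$ was an arbitrary end, (b) follows. (Since $m+H^2\ge m>0$, this also gives the non‑integrability hypothesis \eqref{Nonintrho} of Theorem~\ref{MainA} with $\rho=m+H^2$, which is what the topological applications require.)

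The computations behind (a) are immediate once one has $\Delta_\Sigma x=x^{\perp}$, so the only genuinely delicate point I anticipate is in (b): keeping track of the boundary contribution along $\partial E$ (handled by taking the reference compact set to be a sublevel set of $\psi$, so that the inner flux has the favorable sign) and justifying the standard measure‑theoretic steps — restricting to regular values, the co‑area formula, and the absolute continuity of $t\mapsto W(t)$ — needed to convert the integral bounds into the ODE $W'/W\ge m/(2t)$.
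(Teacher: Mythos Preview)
Your argument is correct. For part (a) you are essentially carrying out the classical ``log--trick'' proof of the Poincar\'e inequality from a positive supersolution: with $g=e^{f}$ one has $\Delta_f g=-(m+H^2)g$, and testing $\Delta_f(\log g)=\Delta_f(-\tfrac{|x|^2}{2})$ against $\varphi^2$ together with Young's inequality is exactly your computation. The paper reaches the same conclusion through the slightly more abstract Lemma~\ref{lemmaweightedpoincare} (a first--eigenvalue argument \`a la Li--Wang), but the content is the same.

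For part (b) you take a genuinely different route. The paper argues by contradiction via $f$--parabolicity: it observes that $e^{f/2}$ is a positive $f$--superharmonic function, and that by the Poincar\'e inequality of part (a) together with \cite[Lemma~1]{IR_fMin} any end of finite $f$--volume would be $f$--parabolic, forcing $e^{f/2}$ to attain its infimum on $\partial E$, which properness rules out. Your approach is instead a direct volume--growth estimate in the spirit of the paper's own proof for translators (Proposition~\ref{InfVolTransl}): using $\Delta_f|x|^2\geq 2m$ and $|\nabla|x|^2|\leq 2|x|$ you obtain the differential inequality $W'(t)\geq \tfrac{m}{2t}W(t)$, hence $W(t)\gtrsim t^{m/2}$. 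This is more elementary and more quantitative (it gives an explicit polynomial lower bound on $\mathrm{vol}_f(E_t)$), and it does not rely on part (a) or on the external reference. The paper's route, on the other hand, avoids the measure--theoretic bookkeeping with regular values and the co--area formula and yields as a byproduct the stronger information recorded in Remark~\ref{RmkEndsNonfPar}, namely that every end is non--$f$--parabolic. The one small point to tidy up in your write--up is the reduction ``such a component represents a prescribed end of $\Sigma$'': given an end $E_0$ with respect to an arbitrary compact $K$, choose $s_0$ with $K\subset\{\psi\le s_0\}$; then $E_0$ contains an unbounded component of $\{\psi>s_0\}$, and infinite $f$--volume of the latter implies that of $E_0$.
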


\begin{remark}
\rm{Note that point (a) in Theorem \ref{ThmExpTools} was originally proved in \cite{ChengZhou_Selfexp}, whereas Theorem \ref{Thmfpoincaretranslato} and also the point (b) in Theorem \ref{ThmExpTools} are, to the best of our knowledge,  completely new.}
\end{remark}

In the setting of translators, by Gauss' equation, a direct computation yields that
\[
\ \mathrm{Ric}_{f}^{\Sigma}\geq -\|A\|^2,
\]
where we are denoting by $A$ the second fundamental form of the immersion. Hence, by Theorem \ref{Thmfpoincaretranslato}, one could apply the abstract Theorem \ref{MainA} to prove topological rigidity under the curvature assumption
\[
\ \|A\|^2\leq \frac{1+H^2}{4\delta},
\]
with $\delta>1$. On the other hand, exploiting rigidity results for translators, it is possible to weaken the curvature assumption to the limit case $\delta=1$. In this respect note that, assuming $\delta=1$ in the assumptions of the abstract Theorem \ref{MainA}, one can not obtain in general the full rigidity statement, as one can see from the proofs given in Section \ref{Sect_AbstrStrThm} below. 
Furthermore, a further application of the standard result in \cite{FCS}, permits to use the same circle of ideas to reach the same conclusions in the more general setting of $f$-stable translators. More precisely we prove the following

\begin{theorem}\label{ThmPoincareTopologyTranslators}
Let $x:\Sigma^{m}\to\mathbb{R}^{m+1}$, $m\geq 2$ be a complete translator satisfying the curvature condition 
\[
\|A\|^2\leq\frac{1+H^2}{4}.
\]
Then $\Sigma$ admits no codimension one cycle that does not disconnects it and it must have only one end. In particular, when $m=2$, $\Sigma$ must be simply connected. 

\noindent More generally, the same conclusions hold by replacing the curvature condition with the request that the immersion is $f$-stable.
\end{theorem}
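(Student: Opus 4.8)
The plan is to deduce the first two conclusions directly from the abstract structure theorem, and then to upgrade the limiting case and the $f$-stable case separately by invoking rigidity results for translators. First I would combine part (a) of Theorem \ref{Thmfpoincaretranslato} with the Gauss-equation bound $\mathrm{Ric}_f^\Sigma \geq -\|A\|^2$ recorded in the text: set $\rho = \frac{1+H^2}{4}$, so that \eqref{eq_weightedpoinc} holds by Theorem \ref{Thmfpoincaretranslato}(a), and \eqref{Nonintrho} holds on every end by Theorem \ref{Thmfpoincaretranslato}(b) (since $\rho \geq \frac14$ and the $f$-volume of each end is infinite). Under the strict inequality $\|A\|^2 \leq \frac{1+H^2}{4\delta}$ with $\delta>1$, we have $a(x) := \|A\|^2 \leq \rho(x)/\delta$ and Theorem \ref{MainA} applies verbatim, giving that $\Sigma$ is connected at infinity and admits no nondisconnecting codimension-one cycle; when $m=2$ this forces genus zero, which combined with one end means $\Sigma$ is simply connected.

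The main work is the borderline case $\delta = 1$, i.e. $\|A\|^2 \leq \frac{1+H^2}{4}$ with equality allowed. Here, as the remark in the text warns, the abstract argument of Section \ref{Sect_AbstrStrThm} does not by itself close: the Bochner/refined-Kato estimate produces a harmonic $1$-form (or a function with controlled energy) for which the integrated inequality degenerates, forcing all the intermediate inequalities to be equalities. I would therefore run the proof of Theorem \ref{MainA} and, in the equality case, extract the rigidity: equality in the Poincar\'e inequality of Theorem \ref{Thmfpoincaretranslato}(a) forces $H \equiv 0$ on the relevant region (since the first inequality there is strict unless $H\equiv0$), so $\Sigma$ is a minimal translator; by the splitting result cited in the introduction (\cite{ImperaRimoldi_Transl}), such a $\Sigma$ splits as $\mathbb{R}\times \Sigma_0^{m-1}$ with $\Sigma_0$ a stable minimal hypersurface, and one checks directly that this product has one end and no nondisconnecting codimension-one cycle (for $m=2$, $\Sigma_0$ is a complete stable minimal curve in $\mathbb{R}^2$, hence a line, so $\Sigma=\mathbb{R}^2$). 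Thus the conclusion survives in the limiting case. I expect this equality-analysis — tracking which inequalities in the proof of Theorem \ref{MainA} saturate and translating that into the vanishing of $H$ and the isometric splitting — to be the principal obstacle, since it requires reopening the abstract proof rather than using it as a black box.

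Finally, to handle an $f$-stable translator without any pointwise curvature bound, I would replace the explicit function $\rho = \frac{1+H^2}{4}$ by the ground-state data furnished by stability. By the Fischer-Colbrie--Schoen argument \cite{FCS} adapted to the weighted setting, $f$-stability of $\Sigma$ (i.e. $L_f = -\Delta_f - \|A\|^2 \geq 0$) yields a positive function $w$ with $\Delta_f w + \|A\|^2 w = 0$; setting $u = \varphi w$ and integrating by parts gives the weighted Poincar\'e inequality
\begin{equation*}
\int_\Sigma \|A\|^2 \varphi^2 e^{-f}\,d\mathrm{vol}_\Sigma \leq \int_\Sigma |\nabla\varphi|^2 e^{-f}\,d\mathrm{vol}_\Sigma, \qquad \forall\, \varphi \in C^\infty_c(\Sigma).
\end{equation*}
Combining this with Theorem \ref{Thmfpoincaretranslato}(a), one gets \eqref{eq_weightedpoinc} with $\rho = \max\{\|A\|^2,\,\tfrac{1+H^2}{4}\}$ or simply with $\rho = \|A\|^2 + \varepsilon$-type regularizations; the point is that now $a(x) = \|A\|^2 \leq \rho(x)$, so we are again in the $\delta = 1$ situation, to which the equality analysis of the previous paragraph applies, and \eqref{Nonintrho} is inherited from Theorem \ref{Thmfpoincaretranslato}(b) since $\rho \geq \tfrac14$. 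This yields the stated topological conclusions, with the $m=2$ simple connectivity following as before from genus zero plus one end.
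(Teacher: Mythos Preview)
Your overall architecture is right --- run the proof of Theorem~\ref{MainA} with $\rho=\tfrac{1+H^2}{4}$ and $a=\|A\|^2$, and in the borderline $\delta=1$ case analyse the equality --- but the equality analysis you propose is wrong, and this is the crux of the proof. You claim that saturation forces $H\equiv 0$ via the \emph{first} inequality in Theorem~\ref{Thmfpoincaretranslato}(a). That trivial inequality ($\tfrac14\le\tfrac{1+H^2}{4}$) plays no role in Lemma~\ref{Liouville}; what saturates is the differential inequality~\eqref{ineq_u} for $u=|\omega|$, coming from Bochner--Kato together with the translator identity $\mathrm{Ric}^\Sigma_f(Y,Y)=-|AY|^2$. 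Equality there gives
\[
|A\omega^\sharp|^2=\|A\|^2|\omega|^2,
\]
and choosing an orthonormal basis with $e_1=\omega^\sharp/|\omega|$ one deduces $Ae_i=0$ for $i\ge 2$, hence $\|A\|^2=H^2$, i.e.\ \emph{zero scalar curvature}, not $H\equiv 0$. (Even the other saturation, in~\eqref{ineq_deltav}, would only yield $\|A\|^2=\tfrac{1+H^2}{4}$, which again does not force $H\equiv 0$.) At this point the paper invokes the classification of translators with vanishing scalar curvature (Corollary~2.4 in \cite{MSHS}): $\Sigma$ must be a tilted grim reaper cylinder or a hyperplane, and both have one end and no nondisconnecting codimension-one cycle, giving the contradiction.

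Your proposed endgame under the (incorrect) conclusion $H\equiv 0$ would also fail for $m\ge 5$: the splitting $\Sigma=\mathbb{R}\times\Sigma_0$ with $\Sigma_0$ a stable minimal hypersurface of $\mathbb{R}^m$ gives no topological control on $\Sigma_0$ in those dimensions. For the $f$-stable case your use of Fischer--Colbrie--Schoen to produce $v>0$ with $\Delta_f v+\|A\|^2 v=0$ is exactly what the paper does; one then feeds this $v$ directly into Lemma~\ref{Liouville} (no need for the $\max$ or $\varepsilon$-regularisation you suggest --- the non-trivial $L^{2,f}$ harmonic $1$-form is still produced via Theorem~\ref{Thmfpoincaretranslato} and Lemma~\ref{nontrivialfharmonic1forms}), and the same corrected equality analysis applies.
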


Reasoning in a similar way and exploiting Theorem \ref{ThmExpTools}, we are able to obtain an analogous structure theorem for self-expanders.

\begin{theorem}\label{ThmPoincareTopologyExpanders}
Let $x:\Sigma^{m}\to\mathbb{R}^{m+1}$, $m\geq 2$ be a complete properly immersed self-expander. If the scalar curvature of $\Sigma$ satisfies 
\[
\ \mathrm{Scal}\geq -(m-1),
\]
then $\Sigma$ admits no codimension one cycle that does not disconnect it and it must have only one end. In particular, when $m=2$, $\Sigma$ must be simply connected.
\end{theorem}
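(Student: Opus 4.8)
The plan is to deduce the statement from Theorem \ref{MainA} (in its borderline version), feeding in the two analytic inputs of Theorem \ref{ThmExpTools} and using the self-expander equation to rule out the equality case. First I would record the relevant curvature identities. Restricting $f=-|x|^2/2$ to $\Sigma$ and using that the Euclidean Hessian of $|x|^2/2$ is the identity, for tangent vectors $X$ one gets $\mathrm{Hess}_\Sigma f(X,X)=-|X|^2-\langle x,\nu\rangle A(X,X)$, and the self-expander equation $\mathbf H=x^{\perp}$ gives $\langle x,\nu\rangle=H$; on the other hand the Gauss equation yields $\mathrm{Ric}^\Sigma(X,X)=H A(X,X)-|A(X,\cdot)|^2$. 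Adding, the terms $H A(X,X)$ cancel and
\[
\mathrm{Ric}^\Sigma_f(X,X)=-|X|^2-|A(X,\cdot)|^2,
\]
whose trace is $\mathrm{Scal}=H^2-\|A\|^2$. Hence the hypothesis $\mathrm{Scal}\ge-(m-1)$ is equivalent to $\|A\|^2\le H^2+m-1$, and therefore $\mathrm{Ric}^\Sigma_f\ge-(1+\|A\|^2)\ge-(m+H^2)$.

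By Theorem \ref{ThmExpTools}(a) the weighted $f$-Poincar\'e inequality \eqref{eq_weightedpoinc} holds with $\rho=m+H^2\ge m$, and by Theorem \ref{ThmExpTools}(b) — this is exactly where properness enters — every end $E$ has infinite $f$-volume, so $\int_E\rho\,e^{-f}d\mathrm{vol}\ge m\,\mathrm{vol}_f(E)=+\infty$ and the non-integrability condition \eqref{Nonintrho} holds. Thus the hypotheses of Theorem \ref{MainA} are met with $a=\rho=m+H^2$, i.e. exactly in the limit case $\delta=1$ (the same phenomenon as for translators in Theorem \ref{ThmPoincareTopologyTranslators}). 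The key point is that, unlike in the translator case, the limit case here is harmless \emph{without} invoking any classification of self-expanders: along any tangent vector $Y\ne0$ the defect
\[
\sigma:=\rho-\Bigl(1+\tfrac{|A(Y,\cdot)|^2}{|Y|^2}\Bigr)=(m-1)+H^2-\tfrac{|A(Y,\cdot)|^2}{|Y|^2}
\]
is \emph{strictly} positive when $m\ge2$, since $\sigma=0$ would force both $\mathrm{Scal}=-(m-1)$ and $|A(Y,\cdot)|^2=\|A\|^2|Y|^2$, hence that $A$ has a single non-zero principal curvature in the direction $Y$, so $\|A\|^2=H^2$, contradicting $\|A\|^2=H^2+m-1$ with $m\ge2$.

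With this in hand I would re-run the proof of Theorem \ref{MainA} from Section \ref{Sect_AbstrStrThm}, keeping track of $\sigma$. If conclusion (1) failed, $\Sigma$ would have at least two ends, and the construction in that proof (which uses \eqref{eq_weightedpoinc} and the infinite $f$-volume of the ends via \eqref{Nonintrho}) would produce a non-constant bounded $f$-harmonic function $u$ with $v:=|\nabla u|\in L^2(\Sigma_f)$; the weighted Bochner formula $\tfrac12\Delta_f v^2=|\mathrm{Hess}\,u|^2+\mathrm{Ric}^\Sigma_f(\nabla u,\nabla u)$, Kato's inequality $|\mathrm{Hess}\,u|^2\ge|\nabla v|^2$, multiplication by a cutoff $\varphi^2$, integration by parts, and the $f$-Poincar\'e inequality applied to $\varphi v$ leave, after letting $\varphi$ exhaust $\Sigma$ (legitimate since $v\in L^2(\Sigma_f)$),
\[
\int_\Sigma\sigma\,v^2\,e^{-f}d\mathrm{vol}\le0,
\]
hence $v\equiv0$ by the strict positivity of $\sigma$, contradicting the non-constancy of $u$. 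The same scheme, run with a non-trivial $L^2$ weighted-harmonic $1$-form associated (à la Carron) to a non-separating codimension-one cycle, contradicts the negation of conclusion (2). Finally, for $m=2$, genus zero together with a single end forces $\Sigma$ to be homeomorphic to $\mathbb R^2$, hence simply connected — alternatively one reaches this through the first de Rham cohomology with compact support, as in the Remark following Theorem \ref{MainA}.

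The only delicate point is the treatment of the borderline $\delta=1$: one must carry out the weighted Bochner estimate precisely enough to retain the defect term $\int_\Sigma\sigma v^2e^{-f}$ — paying attention, in Kato's inequality for the \emph{$f$-harmonic} (not harmonic) function $u$, to the drift contributions, which is why above I only claim the plain Kato inequality — and one must have available the auxiliary finite-energy $f$-harmonic function and $L^2$ weighted-harmonic $1$-form, which is where properness is used through Theorem \ref{ThmExpTools}(b) and the resulting infinite $f$-volume of the ends. Once these are secured, the strict positivity of $\sigma$, the one genuinely self-expander-specific input, closes the argument immediately.
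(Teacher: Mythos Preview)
Your argument is correct, and in fact cleaner in the endgame than the paper's own proof. Both start identically: the $f$-Poincar\'e inequality with $\rho=m+H^2$, the infinite $f$-volume of ends (where properness is used), and the pointwise identity $\mathrm{Ric}^\Sigma_f(Y,Y)=-|Y|^2-|AY|^2$. From there, however, the paper applies Lemma~\ref{Liouville} with $\delta=1$, $a=1+\|A\|^2$, $u=|\omega|$ for a nontrivial $\omega\in\mathcal{H}^1_f(\Sigma)$ (produced uniformly for both cases via Lemma~\ref{nontrivialfharmonic1forms}, not via a bounded $f$-harmonic function as you describe for case (1); this is cosmetic, since $du$ would be such an $\omega$). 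Equality in the Bochner estimate then forces $|A\omega^\sharp|^2=\|A\|^2|\omega|^2$, hence $\|A\|^2=H^2$, i.e.\ $\mathrm{Scal}\equiv0$, and the paper appeals to the external classification result \cite[Theorem 6.1]{AncariCheng_GeoDed} to obtain $\Sigma=\Gamma\times\mathbb{R}^{m-1}$, contradicting the assumed topology.

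Your route avoids the classification entirely: by keeping the exact term $-|AY|^2$ rather than the cruder $-\|A\|^2|Y|^2$ in the Bochner inequality, the defect $\sigma=(m-1)+H^2-|A\omega^\sharp|^2/|\omega|^2$ survives the cutoff/Poincar\'e computation, and your eigenvalue observation (equality would force a single nonzero principal curvature, hence $\|A\|^2=H^2$, incompatible with $\|A\|^2=H^2+m-1$ when $m\ge2$) shows $\sigma>0$ pointwise where $\omega\neq0$. This is genuinely more elementary. It is worth noting that the paper's approach can be shortened in the same spirit: Lemma~\ref{Liouville}(ii) also gives equality in \eqref{ineq_deltav} for $v$, and since $v$ was chosen with $\Delta_f v+\rho v=0$ this forces $a=\rho$, i.e.\ $\|A\|^2=H^2+m-1$; combined with $\|A\|^2=H^2$ one already has the contradiction, without invoking Ancari--Cheng.
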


\subsection{Structure of the paper} The paper is organized as follows. Section \ref{Sect_AbstrStrThm} is devoted to the proof of Theorem \ref{MainA}. In Section \ref{SecPoincareTranslators} we focus on translators and we prove both Theorem \ref{Thmfpoincaretranslato} and Theorem \ref{ThmPoincareTopologyTranslators}. In Section \ref{SecPoincareExpanders} we instead concentrate on self-expanders giving a proof of Theorem \ref{ThmExpTools} and Theorem \ref{ThmPoincareTopologyExpanders}. A final appendix contains a detailed discussion of how the results in this paper permit to give a shortcut in order to obtain the full statement of Theorem C and Theorem D in our previous paper \cite{ImperaRimoldi_Transl} (see also the discussion in Appendix A of \cite{IR_MathZ}) and also to refine results in \cite{IR_MathZ}.
\section{Proof of the abstract structure theorem}\label{Sect_AbstrStrThm}

The proof of Theorem \ref{MainA} relies on the following two lemmas. The first is a vanishing result proved in \cite{R1}, which adapts to the weighted setting a result originally obtained in \cite{PRS-JFA05}, \cite{PRS-Book}; see also \cite{PV1}.

\begin{lemma}[Theorem 8 in \cite{R1}]\label{Liouville}
Assume that on a weighted manifold $M_f$ the locally Lipschitz functions $u\geq 0$, $v>0$ satisfy
\begin{equation}\label{ineq_u}
\Delta_fu+a(x)u\geq0
\end{equation}
and
\begin{equation}\label{ineq_deltav}
\Delta_fv+\delta a(x)v\leq0,
\end{equation}
for some constant $\delta\geq1$ and $a(x)\in C^0(M)$. If $u\in L^{2\beta}\left(M_f\right)$, $1\leq\beta\leq \delta$, then there exists a constant $C\geq0$ such that 
\[
\ u^{\delta}=Cv.
\]
Furthermore,
\begin{enumerate}
	\item [(i)]If $\delta>1$ then $u$ is constant on $M$ and either $a\equiv 0$ or $u\equiv 0$;
	\item [(ii)]If $\delta=1$ and $u\not\equiv0$, $v$ and therefore $u^\delta$ satisfy \eqref{ineq_deltav} with equality sign.
\end{enumerate}
\end{lemma}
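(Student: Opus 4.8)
The plan is to adapt the integral (Caccioppoli-type) technique of Pigola--Rigoli--Setti to the weighted Laplacian $\Delta_f$, extracting from the two differential inequalities a single nonnegative quadratic expression that is then forced to vanish. First I would record both hypotheses in weak form and test them against compactly supported functions built from powers of $u$. Fixing a cutoff $\psi\in C^\infty_c(M)$, I would test $\Delta_f u+au\geq0$ against the nonnegative function $u^{2\beta-1}\psi^2$ (admissible since $2\beta-1\geq1$ and $u$ is locally Lipschitz); integrating by parts against $e^{-f}d\mathrm{vol}$ produces a Caccioppoli inequality bounding $\int u^{2\beta-2}\psi^2|\nabla u|^2 e^{-f}d\mathrm{vol}$ by $\int a\,u^{2\beta}\psi^2 e^{-f}d\mathrm{vol}$ plus a gradient-of-cutoff remainder. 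The supersolution then serves to eliminate the uncontrolled potential $a$: from $\Delta_f v+\delta a v\leq0$ one has the pointwise bound $a\leq-\tfrac1\delta(\Delta_f v)/v$, which I would substitute and integrate by parts once more, moving the derivative off $v$.

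The heart of the matter is the resulting algebraic identity. Writing $X=\nabla\log u$ and $Y=\nabla\log v$ on $\{u>0\}$ and abbreviating $d\mu_\beta=u^{2\beta}e^{-f}d\mathrm{vol}$, the two integrations by parts combine into
\[
\int_M \psi^2 Q(X,Y)\,d\mu_\beta \;\leq\; -2\int_M \psi\,\langle X,\nabla\psi\rangle\,d\mu_\beta+\frac{2}{\delta}\int_M\psi\,\langle Y,\nabla\psi\rangle\,d\mu_\beta,
\]
where $Q(X,Y)=(2\beta-1)|X|^2-\tfrac{2\beta}{\delta}\langle X,Y\rangle+\tfrac1\delta|Y|^2$. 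A one-line discriminant computation shows that $Q$ is positive semidefinite exactly when $\beta^2-2\beta\delta+\delta\leq0$; since the roots of this quadratic in $\beta$ are $\delta\pm\sqrt{\delta^2-\delta}$, and these bracket the interval $[1,\delta]$, the condition holds throughout the hypothesized range $1\leq\beta\leq\delta$. Moreover, when $\delta>1$ the inequality is strict on all of $[1,\delta]$, so $Q$ is strictly positive definite there, whereas at $\delta=1$ (which forces $\beta=1$) the form degenerates into the perfect square $Q=|X-Y|^2$.

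To close the argument I would take $\psi$ to be a radial cutoff equal to $1$ on $B_R$, supported in $B_{2R}$, with $|\nabla\psi|\leq C/R$ on the annulus (here completeness is used). After Young's inequality the remainder integrals are dominated by $\int_{B_{2R}\setminus B_R}|\nabla\psi|^2 d\mu_\beta\leq \tfrac{C}{R^2}\int u^{2\beta}e^{-f}d\mathrm{vol}$, which tends to $0$ as $R\to\infty$ precisely because $u\in L^{2\beta}(M_f)$. The dichotomy then follows from the geometry of $Q$. If $\delta>1$, strict positive-definiteness leaves room to absorb both remainder terms into the good quadratic term, forcing $X=Y=0$ $\mu_\beta$-almost everywhere; a continuity/connectedness argument upgrades this to $u$ (and hence $v$) constant on $M$, which is conclusion (i), and feeding a positive constant $u$ back into both inequalities gives $a\equiv0$ unless $u\equiv0$. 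If $\delta=1$, the remainder terms also assemble into $\langle X-Y,\nabla\psi\rangle$, so setting $Z=X-Y$ one obtains $\int\psi^2|Z|^2 d\mu_1\leq 2\int\psi|Z||\nabla\psi|d\mu_1$, whence $Z\equiv0$, i.e.\ $\nabla\log u=\nabla\log v$ and $u=Cv$; since $v$ is then a constant multiple of $u$, both inequalities hold with equality, which is conclusion (ii). In either case the stated relation $u^\delta=Cv$ with $C\geq0$ is immediate.

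The hard part will be taming the remainder term carrying $Y=\nabla\log v$, for which no integrability is assumed: its control is exactly what the sign of $\delta-1$ governs, through strict positive-definiteness of $Q$ (allowing absorption) when $\delta>1$ and through exact perfect-square cancellation when $\delta=1$, so the borderline $\delta=1$ is the genuinely delicate case. A secondary technical point I would treat carefully is the behaviour on the critical set $\{u=0\}$ and the legitimacy of the two integrations by parts for merely locally Lipschitz $u,v$; the standard remedy is to run the computation with $u+\varepsilon$ and let $\varepsilon\to0$, noting that the factor $u^{2\beta}$ in $d\mu_\beta$ suppresses the zero set.
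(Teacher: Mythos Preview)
The paper does not supply its own proof of this lemma: it is quoted verbatim as Theorem~8 of \cite{R1}, which in turn adapts the Pigola--Rigoli--Setti vanishing technique (\cite{PRS-JFA05}, \cite{PRS-Book}) to the $f$-Laplacian. Your proposal is precisely that adaptation, and the outline is correct: the Caccioppoli inequality from testing \eqref{ineq_u} with $u^{2\beta-1}\psi^2$, the substitution $a\leq -\tfrac1\delta(\Delta_f v)/v$, the quadratic form $Q(X,Y)=(2\beta-1)|X|^2-\tfrac{2\beta}{\delta}\langle X,Y\rangle+\tfrac1\delta|Y|^2$, and the discriminant analysis giving positive (semi)definiteness on $[1,\delta]$ are exactly the steps in the cited source. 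Your identification of the two delicate points---controlling the $Y$-remainder when $\delta>1$ via strict definiteness versus the perfect-square cancellation at $\delta=1$, and the regularisation $u\mapsto u+\varepsilon$ to handle $\{u=0\}$---matches the standard treatment. There is nothing to compare against in the present paper beyond the citation.
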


Recall that, given a Riemannian manifold $M$, we can define the cohomology with compact support  $H_c^k(M)$ as follows. Consider the exact sequence:
\[
\cdots \longrightarrow C^{\infty}_c(\Lambda^{k-1}T^*M)\longrightarrow C^{\infty}_c(\Lambda^{k}T^*M)\longrightarrow C^{\infty}_c(\Lambda^{k+1}T^*M)\longrightarrow\cdots
\] 
Then one can define
\[
H^k_c(M)\doteq\mathrm{ker}\{d C^{\infty}_c(\Lambda^{k}T^*M)\rightarrow C^{\infty}_c(\Lambda^{k+1}T^*M)/dC^{\infty}_c(\Lambda^{k-1}T^*M)\}.
\]

Let $Z$ be a $p$-dimensional oriented, connected, compact manifold without boundary and $\varphi:Z\rightarrow M^n$ be an embedding. The pair $(Z,\varphi)$ is called a $p$-cycle on $M$. We say that an $(m-1)$-cycle (aka codimension one cycle) $Z$ does not disconnect $M$ if $M\backslash \varphi(Z)$ is connected. 
\medskip

It is a well known fact that the first cohomology group with compact support gives a control on the topology of the manifold $M$. This is the content of the following proposition, which combines Lemma 2.1 in \cite{Carron_LectNotes} and Proposition 3.3 in \cite{KS}.
\begin{proposition}[\cite{Carron_LectNotes}, \cite{KS}]\label{propH1candtopology}
Let $(M, g)$ be a non-compact connected Riemannian manifold. Then
\begin{itemize}
\item[(i)] If $M$ has at least $k$ ends then
\[
\ \mathrm{dim}H_{c}^{1}(M)\geq k-1;
\]
\item[(ii)] If there exists a codimension one cycle which does not disconnect $M$, then there exists a closed $1$-form $\alpha$ with compact support and a $1$-cycle $\gamma$ such that
\[
 \int_{\gamma}\alpha=1. 
\]
Therefore $H_{c}^{1}(M)\neq \left\{0\right\}$.
\end{itemize}
\end{proposition}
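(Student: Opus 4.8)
The plan is to prove the two items separately, in each case producing explicit compactly supported closed $1$-forms and, for (ii), an explicit dual $1$-cycle. Throughout I use the description of $H^1_c(M)$ recalled above: the space of closed forms in $C^\infty_c(\Lambda^1 T^*M)$ modulo the image of $d$ acting on $C^\infty_c(M)$.

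For (i), assuming $M$ has at least $k$ ends, I would first fix a smooth relatively compact domain $K$ whose complement $M\setminus K$ has (at least) $k$ unbounded connected components $E_1,\dots,E_k$; enlarging $K$ I may absorb all bounded components of $M\setminus K$ into $K$. For $i=1,\dots,k-1$ I would then choose $\phi_i\in C^\infty(M)$ equal to $1$ on $E_i$ outside a compact set and equal to $0$ on $M\setminus E_i$ outside a compact set, so that each $\phi_i$ is locally constant off a compact set. Consequently $\alpha_i\doteq d\phi_i\in C^\infty_c(\Lambda^1 T^*M)$ is closed and compactly supported, hence defines a class in $H^1_c(M)$. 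The key step is the linear independence of $[\alpha_1],\dots,[\alpha_{k-1}]$: if $\sum_i c_i\,d\phi_i=d\psi$ for some $\psi\in C^\infty_c(M)$, then $\sum_i c_i\phi_i-\psi$ has vanishing differential and, $M$ being connected, equals a constant $c$. Letting the point tend to infinity along $E_j$ (for $1\le j\le k-1$) forces $c_j=c$, while doing so along $E_k$ forces $0=c$; hence every $c_j=0$. This yields $\mathrm{dim}\,H^1_c(M)\ge k-1$.

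For (ii), let $(Z,\varphi)$ be a non-disconnecting codimension one cycle and set $N\doteq\varphi(Z)$, a closed embedded hypersurface which I treat as two-sided, the coorientation being provided by the orientation of $Z$. I would pick a tubular neighborhood $U\cong N\times(-1,1)$ with normal coordinate $t$, choose $\chi\in C^\infty((-1,1))$ with $\chi\equiv 0$ near $-1$ and $\chi\equiv 1$ near $+1$, and set $\alpha\doteq \chi'(t)\,dt$ on $U$ and $\alpha\doteq 0$ off $U$. Since $\chi'$ vanishes near $\partial U$, the form $\alpha$ is smooth, closed, and compactly supported, so $[\alpha]\in H^1_c(M)$. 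To produce $\gamma$, I would take a short normal segment through a point of $N$ joining its two faces (a single transversal crossing) and close it up by a path contained in $M\setminus N$; this is possible precisely because $N$ does not disconnect $M$. The resulting loop $\gamma$ meets $N$ transversally in exactly one point. Finally I would evaluate $\int_\gamma\alpha$ by applying the fundamental theorem of calculus on each arc of $\gamma^{-1}(U)$: on such an arc the integral equals the difference of the values of $\chi$ at the two endpoints, which lie on $\partial U$ where $\chi\in\{0,1\}$. Summing over arcs shows $\int_\gamma\alpha$ equals the algebraic intersection number $\gamma\cdot N$, which is $1$ for our loop; hence $[\alpha]\ne 0$ and $H^1_c(M)\ne\{0\}$.

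I expect the delicate points to be the following. In (i), one must verify that the $\phi_i$ can genuinely be chosen locally constant at infinity, so that $d\phi_i$ is compactly supported and the evaluation-at-infinity argument is rigorous. In (ii), the main obstacle is the coorientability of $N$ together with the bookkeeping in the intersection computation: one has to argue that arcs of $\gamma$ re-entering the tube $U$ without crossing $N$ contribute zero to $\int_\gamma\alpha$ (their endpoints sit on the same face, where $\chi$ is locally constant), so that only the single transversal crossing survives and the integral equals $1$.
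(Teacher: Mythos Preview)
The paper does not actually supply a proof of this proposition: it is stated as a citation result, attributed to Carron's lecture notes (for part (i)) and to Kwong--Sharp (for part (ii)), and is used as a black box in the proof of Lemma~\ref{nontrivialfharmonic1forms}. So there is no ``paper's own proof'' to compare against.

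That said, your argument is the standard one and is essentially what is carried out in those references. Part (i) is clean and correct as written. In part (ii) your construction of the Thom-type form $\alpha=\chi'(t)\,dt$ and of the loop $\gamma$ with a single transversal crossing is exactly the right idea, and the computation of $\int_\gamma\alpha$ as the algebraic intersection number is fine. The one genuine issue you flag yourself---coorientability of $N$---is real: the product tubular neighborhood $N\times(-1,1)$ exists only if $N$ is two-sided. In the paper's applications $M=\Sigma$ is an orientable hypersurface of $\mathbb{R}^{m+1}$ and $Z$ is oriented, so two-sidedness follows, but for the abstract statement you should either add orientability of $M$ as a hypothesis or note that when $N$ is one-sided one passes to the orientation double cover of a tubular neighborhood to run the same argument. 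Apart from this, nothing is missing.
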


Let us now denote by $\mathcal{H}^{1}_{f}(M)$ the space of $f$-harmonic $1$-forms which are square integrable with respect to the weighted measure:
\[
\ \mathcal{H}^{1}_{f}(M)\doteq\left\{\omega\in\Lambda^{1}T^{*}M\,:\,d\omega=\delta_{f}\omega=0,\,\,\int_{M}|\omega|^{2}e^{-f}d\mathrm{vol}<+\infty\right\},
\]
where $\delta_{f}=\delta+i_{\nabla f}$ and $\delta$ is the usual codifferential. As a consequence of the previous proposition we can prove the following

\begin{lemma}\label{nontrivialfharmonic1forms}
Let $M_f$ be a complete weighted manifold. Assume that $M_f$ enjoys the weighted $f$-Poincar\'e inequality \eqref{eq_weightedpoinc} for some positive smooth function $\rho$ satisfying, on any end $E$ of $M$,
\begin{equation*}
\int_{E}\rho e^{-f}d\mathrm{vol}=+\infty.
\end{equation*}
Suppose that one of the following assumptions is satisfied:
\begin{itemize}
    \item[(i)] $M$ has at least two ends;
   \item[(ii)] $M$ admits a codimension one cycle that does not disconnect it.
\end{itemize}
Then $\mathcal{H}^{1}_{f}(M)\neq \left\{0\right\}$.
\end{lemma}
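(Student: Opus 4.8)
The plan is to obtain the required $f$-harmonic $L^{2}$ one-form as the component of a suitably chosen compactly supported closed one-form that is orthogonal, in the weighted $L^{2}$ inner product, to the space of exact forms, and to use the two hypotheses only in order to guarantee that this component is nonzero.

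Concretely, I would first note that in both cases (i) and (ii) Proposition \ref{propH1candtopology} gives $H^{1}_{c}(M)\neq\{0\}$, so I may fix a smooth closed one-form $\alpha$ with compact support representing a nonzero class in $H^{1}_{c}(M)$; in case (ii) Proposition \ref{propH1candtopology} additionally provides a $1$-cycle $\gamma$ with $\int_{\gamma}\alpha=1$. Being smooth and compactly supported, $\alpha$ lies in the Hilbert space $\mathcal{L}$ of one-forms square integrable with respect to $e^{-f}d\mathrm{vol}$. Let $\mathcal{E}\subset\mathcal{L}$ be the closure of $\{d\phi:\phi\in C^{\infty}_{c}(M)\}$, let $P$ denote the orthogonal projection of $\mathcal{L}$ onto $\mathcal{E}$, and set $\omega:=\alpha-P\alpha$. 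By construction $\omega\in\mathcal{L}$ and $\omega$ is orthogonal to $d\phi$ for every $\phi\in C^{\infty}_{c}(M)$, which says precisely that $\delta_{f}\omega=0$ in the distributional sense; moreover $d\alpha=0$ and $P\alpha$ is an $\mathcal{L}$-limit of exact, hence closed, forms, so $d(P\alpha)=0$ weakly and therefore $d\omega=0$ weakly. Thus $\omega$ is a weak solution of $(d\delta_{f}+\delta_{f}d)\omega=0$; this operator has the same scalar principal symbol as the Hodge Laplacian, hence is elliptic, so $\omega$ is smooth and $\omega\in\mathcal{H}^{1}_{f}(M)$.

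The heart of the matter is to show $\omega\neq 0$, and this is where \eqref{eq_weightedpoinc} and \eqref{Nonintrho} enter. Assume by contradiction that $\omega=0$, so that $\alpha=P\alpha=\lim_{k}d\phi_{k}$ in $\mathcal{L}$ for some $\phi_{k}\in C^{\infty}_{c}(M)$. Applying the weighted Poincaré inequality \eqref{eq_weightedpoinc} to $\phi_{k}-\phi_{l}$ gives
\[
\int_{M}\rho\,(\phi_{k}-\phi_{l})^{2}e^{-f}d\mathrm{vol}\leq\int_{M}|\nabla\phi_{k}-\nabla\phi_{l}|^{2}e^{-f}d\mathrm{vol}\longrightarrow 0\qquad(k,l\to\infty),
\]
so $(\phi_{k})$ is Cauchy in $L^{2}(M,\rho\,e^{-f}d\mathrm{vol})$ and converges there to some $\phi$ with $\int_{M}\rho\,\phi^{2}e^{-f}d\mathrm{vol}<\infty$. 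Since $\rho>0$ is smooth this also forces $\phi_{k}\to\phi$ in $L^{2}_{\mathrm{loc}}$, and together with $d\phi_{k}\to\alpha$ in $L^{2}_{\mathrm{loc}}$ one obtains $d\phi=\alpha$ distributionally; as $\alpha$ is smooth, so is $\phi$. Now fix a compact $K\supset\mathrm{supp}\,\alpha$ such that the unbounded connected components of $M\setminus K$ are neighbourhoods of the ends of $M$. On each such end $E$ we have $d\phi=\alpha=0$, so $\phi\equiv c_{E}$ for a constant $c_{E}$; if $c_{E}\neq 0$ then by \eqref{Nonintrho}
\[
\int_{M}\rho\,\phi^{2}e^{-f}d\mathrm{vol}\;\geq\;\int_{E}\rho\,\phi^{2}e^{-f}d\mathrm{vol}\;=\;c_{E}^{2}\int_{E}\rho\,e^{-f}d\mathrm{vol}\;=\;+\infty,
\]
contradicting the finiteness of the left-hand side; hence $c_{E}=0$ for every end. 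Thus $\phi$ is a globally defined smooth function with $d\phi=\alpha$ that vanishes near every end of $M$, and it follows that $[\alpha]=0$ in $H^{1}_{c}(M)$: in case (ii) at once, since $1=\int_{\gamma}\alpha=\int_{\gamma}d\phi=0$, and in general because (in the spirit of Carron) the class of $\alpha$ in $H^{1}_{c}(M)$ is detected by the limiting values $\{c_{E}\}$ of $\phi$ at the ends of $M$ taken modulo a common additive constant, all of which vanish. This contradicts the choice of $\alpha$; hence $\omega\neq 0$ and $\mathcal{H}^{1}_{f}(M)\neq\{0\}$.

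The step I expect to require the most care, and the one where the truly weighted, non-constant $\rho$ nature of \eqref{eq_weightedpoinc} is essential, is the passage from a uniform weighted $L^{2}$ bound on the gradients $\nabla\phi_{k}$ to an honest limiting function $\phi$ carrying finite weighted $\rho$-energy: this is exactly what the Poincaré inequality provides and what fails, for instance, on a flat cylinder, whose space of $L^{2}$ harmonic one-forms is trivial. Condition \eqref{Nonintrho} then plays, in the weighted category, the role that ``all ends have infinite volume'' plays in the unweighted arguments of Carron, forcing the limiting function to die at infinity, and only the combination of the two hypotheses allows the nonzero class in $H^{1}_{c}(M)$ to survive as a nontrivial element of $\mathcal{H}^{1}_{f}(M)$.
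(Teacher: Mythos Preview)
Your argument is correct and follows essentially the same route as the paper. Both proofs show that the natural map $H^{1}_{c}(M)\to\mathcal{H}^{1}_{f}(M)$ induced by weighted Hodge projection is injective, the key point being that if a compactly supported closed form $\alpha$ lies in the $L^{2,f}$-closure of exact forms $d\phi_{k}$, then the Poincar\'e inequality \eqref{eq_weightedpoinc} forces the $\phi_{k}$ to converge in $L^{2}(\rho\,e^{-f}d\mathrm{vol})$ to a primitive $\phi$ of $\alpha$, which must then vanish on every end by \eqref{Nonintrho} and hence have compact support. The only differences are in packaging: the paper invokes Bueler's weighted Hodge decomposition and Lemma~3.1 of \cite{KS} to produce the smooth primitive, whereas you carry out the projection and the limit argument explicitly; and the paper concludes directly that $\phi$ is compactly supported, while you phrase the final step in terms of the limiting constants $c_{E}$ (which, once all shown to vanish, amounts to the same thing after enlarging $K$ to absorb any bounded components of $M\setminus K$).
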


\begin{proof}
We claim that if 
\[
\ \int_{M}\rho\varphi^{2}e^{-f}d\mathrm{vol}\leq \int_{M}|\nabla \varphi|^{2}e^{-f}d\mathrm{vol},
\] 
for any $\varphi\in C_{c}^{\infty}(M)$, and \eqref{Nonintrho} holds, then
\[
\ \mathrm{dim}\mathcal{H}^{1}_{f}(M)\geq \mathrm{dim}H^{1}_{c}(M). 
\]
Once proved this claim, the result is a direct consequence of Proposition \ref{propH1candtopology}.

As for the claim, note that in \cite{Bue} it was proved the following decomposition of the space $L^{2,f}(\Lambda^1T^*M)$ of square integrable one forms on $M$ with respect to the weighted measure:
\[
L^{2,f}(\Lambda^1T^*M)=A\oplus B_f \oplus \mathcal{H}^{1}_{f}(M),
\]
where
\[
\begin{cases}
A=\overline{\{dg: g\in C^\infty_c(M)\}},\\
B_f=\overline{\{\delta_f\eta: \eta\in C^{\infty}_c(\Lambda^2T^*M)\}},
\end{cases}
\]
the closure been taken with respect to the weighted $L^2$-norm.
Denote by $Z^1_f(M)$ the space of $L^{2,f}$ closed $1$-forms, i.e. 
\[
Z^1_f(M)=\{\omega \in L^{2,f}(\Lambda^1T^*M): d\omega=0\}.
\]
It then follows that 
\[
H^1_f(M)\doteq\frac{Z^1_f(M)}{A}\simeq \mathcal{H}^{1}_{f}(M).
\]
We now consider the map
\[
i:H^1_c(M)\rightarrow H^1_f(M).
\]
and we claim that $i$ is injective. This is equivalent to prove that if $\beta$ is a closed $1$-form which is zero with respect to the $L^{2,f}$ cohomology, then there exists $v\in C_{c}^{\infty}(M)$ such that $\beta=dv$. First note that if $\beta$ is zero with respect to the $L^{2,f}$ cohomology then there exists a sequence $v_j\in C^{\infty}_c(M)$ such that
\[
\lim_{j\rightarrow+\infty}\|\beta-d v_j\|=0. 
\]
Lemma 3.1 in \cite{KS} implies that there exists $v\in C^{\infty}(M)$ such that $dv=\beta$. Moreover, the validity of the weighted $f$-Poincar\'e inequality \eqref{eq_weightedpoinc} implies that $\{v_j\}$ converges to the function $v$ in $L^2(M, \rho e^{-f}d\mathrm{vol})$. Since $\beta$ has compact support, $v$ must be locally constant at infinity. On the other hand, since $v\in L^2(M, \rho e^{-f}d\mathrm{vol})$ and \eqref{Nonintrho} holds, this implies that $v$ must have compact support, proving the claim.

\noindent To conclude the proof note that the injectivity of the map $i$ implies that
\[
\mathrm{dim}(H_c^1(M))\leq \mathrm{dim}(\mathcal{H}^{1}_{f}(M)).
\]

\end{proof}

\begin{proof}[Proof of Theorem \ref{MainA}]
We have that the validity of the weighted $f$-Poincar\'e inequality \eqref{eq_weightedpoinc} implies the existence of a function $v>0$ satisfying the equality
\[
\Delta_f v+\rho v=0.
\]
As a consequence note that $M$ must have infinite $f$-volume. Indeed if we assume by contradiction that $\mathrm{vol}_{f}(M)<\infty$ then $M_f$ is $f$-parabolic, i.e. any $f$-superharmonic function which is bounded from below must be constant; see e.g. \cite{Grig_Weight}. This would imply that the function $v$ above should be constant and thus that $\rho$ should vanish identically, which is not the case. Moreover, since $\rho(x)\geq \delta a(x)$, we obtain that the function $v$ is a solution of inequality \eqref{ineq_deltav}. 

Assume now by contradiction that either $\Sigma$ has at least two ends or that there exists a codimension one cycle that does not disconnect $M$. Lemma \ref{nontrivialfharmonic1forms} gives that there exists a non-trivial $\omega\in \mathcal{H}^1_f(M)$. As a consequence of $f$-Weitzenb\"ock and Kato's inequalities, it holds that
\begin{align*}
|\omega|\Delta_f |\omega|&=\Delta_f\frac{|\omega|^2}{2}-|\nabla |\omega||^2\\
&=|\nabla \omega|^2-|\nabla |\omega||^2+\mathrm{Ric}_f(\omega^\sharp,\omega^\sharp)\\
&\geq-a(x)|\omega|^2.
\end{align*}
Hence the function $u=|\omega|$ is a solution of inequality \eqref{ineq_u} satisfying $u\in L^{2\beta}(M_f)$ with $\beta=1$. We can then apply Lemma \ref{Liouville} and conclude that $u=|\omega|$ is a non-zero constant. However, this is in contradiction to the fact that $M$ has infinite $f$-volume.
\end{proof}

\section{Poincar\'e inequality and topological rigidity of translators under stability assumptions}\label{SecPoincareTranslators}

Let $x:\Sigma^m\to\mathbb{R}^{m+1}$ be a translator for the mean curvature flow. We first recall some basic identities that will be useful in what follows. For a proof we refer to \cite[Lemma 2.1]{IR_MathZ}.

\begin{lemma}\label{LemBasEq}
Set $f=-\<x,\bar{E}_{m+1}\>$ and denote by $\nu$ the unit normal vector to $\Sigma$, by $A$ the second fundamental form of the immersion, and by $E_{i}$ the projections of $\bar{E}_{i}$ on $T\Sigma$. Let $\xi\in \chi(\Sigma)$, then 
\begin{align*}
\nabla_{\xi}E_{i}=&\<\bar{E}_{i}, \nu\>A\xi;\\
\nabla\<\bar{E}_{i}, \nu\>=&-AE_{i};\\
\Delta_{f}\<\bar{E_{i}},\nu\>=&-\<\bar{E}_{i}, \nu\>\|A\|^2;\\
\Delta_{f}\<x,\bar{E}_{i}\>=&\<\bar{E}_{i},\bar{E}_{m+1}\>.
\end{align*}
\end{lemma}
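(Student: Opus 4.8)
The final statement to prove is Lemma \ref{LemBasEq}, a collection of four basic identities for translators. The plan is to derive each identity directly from the translator equation \eqref{T}, i.e. $\mathbf{H}=\bar E_{m+1}^\perp$, together with the Gauss and Weingarten formulas. Throughout, $\nabla$ denotes the Levi-Civita connection on $\Sigma$, $\overline\nabla$ the flat connection on $\mathbb R^{m+1}$, and we use that $\overline\nabla\bar E_i=0$ since the $\bar E_i$ are parallel.

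First I would establish $\nabla_\xi E_i = \langle\bar E_i,\nu\rangle A\xi$. Write $\bar E_i = E_i + \langle\bar E_i,\nu\rangle\nu$ along $\Sigma$. Differentiating in the direction $\xi\in\chi(\Sigma)$ and using $\overline\nabla_\xi\bar E_i=0$ gives $0 = \overline\nabla_\xi E_i + \xi(\langle\bar E_i,\nu\rangle)\nu + \langle\bar E_i,\nu\rangle\overline\nabla_\xi\nu$. Now decompose $\overline\nabla_\xi E_i = \nabla_\xi E_i + \langle A\xi, E_i\rangle\nu$ (Gauss formula, with the sign convention tied to $A$ via $\overline\nabla_\xi\nu = -A\xi$ tangentially, modulo the orientation convention the paper uses) and $\overline\nabla_\xi\nu = -A\xi$ (Weingarten). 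Taking tangential parts yields $\nabla_\xi E_i = \langle\bar E_i,\nu\rangle A\xi$, and taking the normal part yields, after rearranging, the second identity $\nabla\langle\bar E_i,\nu\rangle = -AE_i$ (reading $\xi(\langle\bar E_i,\nu\rangle) = -\langle A\xi,E_i\rangle = -\langle AE_i,\xi\rangle$).

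Next, for the third identity $\Delta_f\langle\bar E_i,\nu\rangle = -\langle\bar E_i,\nu\rangle\|A\|^2$: I would compute the Hessian of $\langle\bar E_i,\nu\rangle$. Using $\nabla\langle\bar E_i,\nu\rangle = -AE_i$ from the previous step, take a further covariant derivative and trace, invoking the Codazzi equation (the ambient space is flat, so $\nabla A$ is fully symmetric) together with the fact that $\mathbf H = \bar E_{m+1}^\perp$ means the scalar mean curvature satisfies $H = \langle\bar E_{m+1},\nu\rangle$ and hence $\nabla H = -AE_{m+1}$; this is precisely the translator-specific input that turns the raw Simons-type computation into the clean eigenvalue-type equation. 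The drift term $-\langle\nabla f,\nabla\langle\bar E_i,\nu\rangle\rangle$ combines with the trace of the Codazzi identity (which produces a $\nabla H$ term) in just the right way, because $\nabla f = -E_{m+1}$ and $\nabla H = -AE_{m+1}$, so the $\Delta$-part contributes $-\langle AE_{m+1}, AE_i\rangle\cdot(\text{stuff}) - \langle\bar E_i,\nu\rangle\|A\|^2$ and the drift cancels the unwanted cross term. I expect this is the main obstacle: bookkeeping the signs and the exact point where the translator equation enters, since an arbitrary hypersurface would not satisfy such an identity. The fourth identity $\Delta_f\langle x,\bar E_i\rangle = \langle\bar E_i,\bar E_{m+1}\rangle$ is easier: $\nabla\langle x,\bar E_i\rangle = E_i$ (since the tangential part of $\bar E_i$ is $E_i$ and the position field $x$ differentiates to the identity), so $\Delta\langle x,\bar E_i\rangle = \mathrm{div}(E_i) = \mathrm{tr}(\nabla E_i) = \langle\bar E_i,\nu\rangle\,\mathrm{tr}(A) = \langle\bar E_i,\nu\rangle H$ using the first identity; then $-\langle\nabla f,\nabla\langle x,\bar E_i\rangle\rangle = \langle E_{m+1},E_i\rangle = \langle\bar E_{m+1},\bar E_i\rangle - \langle\bar E_{m+1},\nu\rangle\langle\bar E_i,\nu\rangle = \langle\bar E_i,\bar E_{m+1}\rangle - H\langle\bar E_i,\nu\rangle$, and adding the two gives $\langle\bar E_i,\bar E_{m+1}\rangle$, with the $H\langle\bar E_i,\nu\rangle$ terms cancelling. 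Since the paper states this is already proved in \cite[Lemma 2.1]{IR_MathZ}, I would present the above as a concise recollection of that derivation rather than a from-scratch proof.
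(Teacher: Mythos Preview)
Your derivations are correct; each of the four identities follows exactly as you outline from the Gauss and Weingarten formulas together with $H=\langle\bar E_{m+1},\nu\rangle$, $\nabla f=-E_{m+1}$ and Codazzi. The paper itself does not supply a proof at all but simply refers the reader to \cite[Lemma 2.1]{IR_MathZ}, so your write-up is precisely the ``concise recollection'' you propose and there is nothing to compare against in the present paper.
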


The next lemma is an adaptation to the weighted setting of Proposition 1.1 in \cite{LiWang_Poincare}.
\begin{lemma}\label{lemmaweightedpoincare}
Let $M_f$ be a weighted manifold. Assume that there exists a nonnegative function $g$ defined on $M$, that is not identically $0$, satisfying
\[
\Delta_f g\leq -\rho g,
\]
with $\rho$ a non-negative function on $M$. Then the following weighted $f$-Poincar\'e inequality holds
\[
\int_M \rho\varphi^2 e^{-f}d\mathrm{vol}\leq \int_M |\nabla \varphi|^2 e^{-f}d\mathrm{vol}
\]
for all $\varphi\in C^\infty_c(M)$.
\end{lemma}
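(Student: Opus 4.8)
\textbf{Proof proposal for Lemma \ref{lemmaweightedpoincare}.} The plan is to run the standard Li–Wang substitution argument adapted to the weighted Laplacian $\Delta_f$, using the positive (or merely nonnegative, nontrivial) supersolution $g$ as a test weight. First I would fix $\varphi\in C^\infty_c(M)$ and test the differential inequality $\Delta_f g\le-\rho g$ against $\varphi^2/g$, or equivalently integrate by parts the quantity $\int_M \frac{\varphi^2}{g}\Delta_f g\, e^{-f}d\mathrm{vol}$; the weighted integration-by-parts formula $\int_M (\Delta_f h)\,\psi\, e^{-f}d\mathrm{vol}=-\int_M \langle\nabla h,\nabla\psi\rangle e^{-f}d\mathrm{vol}$ for compactly supported $\psi$ is exactly what makes $\Delta_f$ symmetric in $L^2(M_f)$, so this is clean. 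A small technical point is that $g$ could vanish somewhere; the usual fix is to replace $g$ by $g+\e$ for $\e>0$ (note $\Delta_f(g+\e)=\Delta_f g\le -\rho g\le -\rho(g+\e)+\rho\e$, which is slightly lossy) or, more cleanly, to first prove the inequality on the open set $\{g>0\}$ and then observe $\varphi$ can be approximated; I would handle this by the $g\mapsto g+\e$ regularization and let $\e\to0$ at the end, or simply add the hypothesis implicitly used in Li–Wang that $g>0$.

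The core computation: expanding $\nabla(\varphi^2/g)=\frac{2\varphi\nabla\varphi}{g}-\frac{\varphi^2\nabla g}{g^2}$, integration by parts gives
\[
\int_M \rho\varphi^2 e^{-f}d\mathrm{vol}\le -\int_M \frac{\varphi^2}{g}\Delta_f g\, e^{-f}d\mathrm{vol}=\int_M\Big(\frac{2\varphi}{g}\langle\nabla\varphi,\nabla g\rangle-\frac{\varphi^2}{g^2}|\nabla g|^2\Big)e^{-f}d\mathrm{vol}.
\]
Then I would complete the square: writing $\frac{2\varphi}{g}\langle\nabla\varphi,\nabla g\rangle\le 2|\nabla\varphi|\cdot\frac{|\varphi|}{g}|\nabla g|\le |\nabla\varphi|^2+\frac{\varphi^2}{g^2}|\nabla g|^2$ by Cauchy–Schwarz and Young's inequality, the negative term $-\frac{\varphi^2}{g^2}|\nabla g|^2$ is exactly cancelled, leaving
\[
\int_M \rho\varphi^2 e^{-f}d\mathrm{vol}\le \int_M |\nabla\varphi|^2 e^{-f}d\mathrm{vol},
\]
which is the desired weighted Poincaré inequality.

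The main obstacle, such as it is, is bookkeeping rather than conceptual: justifying the integration by parts (all integrands are supported where $\varphi\ne 0$, hence on a compact set, so no boundary terms and no convergence issues provided $g$ is, say, locally Lipschitz and bounded away from $0$ on that compact set — which holds if $g>0$ is continuous), and handling the possibility that $g$ merely vanishes somewhere without being positive. If one insists on allowing zeros of $g$, the cleanest route is to note that on the open set $\Omega=\{g>0\}$ the argument above goes through verbatim for $\varphi\in C^\infty_c(\Omega)$, giving the inequality for such $\varphi$; but for a nonnegative nontrivial supersolution of $\Delta_f g\le -\rho g\le 0$, the strong maximum principle forces $g>0$ everywhere on (each component of) $M$ as long as $M$ is connected, so in fact no loss occurs. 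I would simply invoke the strong maximum principle to upgrade "nonnegative, not identically zero" to "strictly positive" and then run the computation above without regularization.
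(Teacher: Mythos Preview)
Your argument is correct and complete; the substitution $\psi=\varphi^2/g$ followed by integration by parts and completion of the square is the classical Barta/Fischer-Colbrie--Schoen trick, and your invocation of the strong maximum principle to upgrade $g\ge0$, $g\not\equiv0$ to $g>0$ is the right way to make the division by $g$ legitimate.

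The paper, however, takes a genuinely different route (following Li--Wang, Proposition~1.1): rather than testing the differential inequality against $\varphi^2/g$, it fixes a smooth compact domain $\Omega\Subset M$, lets $u>0$ be the first Dirichlet eigenfunction of $\Delta_f+\rho$ on $\Omega$ with eigenvalue $\lambda_1^f(\rho,\Omega)$, and computes
\[
\lambda_1^f(\rho,\Omega)\int_\Omega ug\,e^{-f}d\mathrm{vol}\ \ge\ \int_\Omega\big(u\Delta_f g-g\Delta_f u\big)e^{-f}d\mathrm{vol}\ =\ -\int_{\partial\Omega} g\,\tfrac{\partial u}{\partial\nu}\,e^{-f}d\mathrm{vol}_{m-1}\ \ge\ 0,
\]
using $u|_{\partial\Omega}=0$ and the Hopf lemma ($\partial u/\partial\nu\le0$). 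Since $\int_\Omega ug>0$ this forces $\lambda_1^f(\rho,\Omega)\ge0$, and the variational characterization of $\lambda_1^f$ then gives the inequality for all $\varphi\in C^\infty_c(\Omega)$; letting $\Omega$ exhaust $M$ finishes the proof. Your approach is more elementary---no eigenvalue theory, no Hopf lemma---and makes the role of $g$ as a ``ground state'' supersolution transparent via the identity behind the square completion. The paper's spectral approach, on the other hand, is slightly more robust to the possibility that $g$ vanishes: one only needs $g\not\equiv0$ on some large enough $\Omega$, so the strong maximum principle is not strictly required there.
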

\begin{proof}
Let $\Omega\subset M$ be a smooth compact subdomain of $M$. Denote by $\lambda_1^f(\rho,\Omega)$ the first Dirichlet eigenvalue on $\Omega$ for the operator
\[
\Delta_f+\rho,
\] 
and let $u$ be the first eigenfunction satisfying
\[
\begin{cases}
\Delta_f u+\rho u=-\lambda_1^f(\rho,\Omega) u & \textrm{on } \Omega\\
u\equiv 0 & \textrm{on } \partial \Omega.
\end{cases}
\]
In particular, $u>0$ in $\mathrm{int}(\Omega)$ and, as a consequence of the Hopf Lemma, $\partial u/\partial \nu\leq 0$ on $\partial \Omega$. Here $\nu$ denotes the outward unit normal of $\partial \Omega$. Integration by parts yields:
\begin{align*}
\lambda_1^f(\rho,\Omega) \int_\Omega ug e^{-f}d\mathrm{vol}& \geq \int_\Omega(u\Delta_f g-g\Delta_f u) e^{-f}d\mathrm{vol}\\
&= \int_{\partial \Omega} u\frac{\partial g}{\partial \nu} e^{-f}d\mathrm{vol}_{m-1}-
 \int_{\partial \Omega} g\frac{\partial u}{\partial \nu} e^{-f}d\mathrm{vol}_{m-1}\\
 &\geq 0.
 \end{align*}
 Since both $u>0$ and $g\geq 0$ are not identically $0$, this implies that $\lambda_1^f(\rho,\Omega)\geq 0$. Now, by the min-max principle,
 \begin{align*}
 0 &\leq \lambda_1^f(\rho,\Omega)\int_\Omega \varphi^2 e^{-f}d\mathrm{vol}\\
 &\leq \int_\Omega \left(|\nabla \varphi|^2-\rho \varphi^2\right) e^{-f}d\mathrm{vol},
 \end{align*}
 for all $\varphi\in C^{\infty}_0(\Omega)$. Since $\Omega $ is arbitrary, this implies the validity of the weighted $f$-Poincar\'e inequality on the whole $M$.
\end{proof}

As a consequence of Lemma \ref{lemmaweightedpoincare} we get the following 
\begin{theorem}
Let $x:\Sigma^{m\geq 2}\to\mathbb{R}^{m+1}$ be a translator for the mean curvature flow. Then the following $f$-Poincar\'e inequality holds
\begin{equation}\label{eqfpoincaretranslators}
\frac{1}{4}\int_\Sigma \varphi^2 e^{-f}d\mathrm{vol}_{\Sigma}\leq \frac{1}{4}\int_\Sigma (1+H^2) \varphi^2 e^{-f}d\mathrm{vol}_{\Sigma}\leq \int_\Sigma |\nabla \varphi|^2 e^{-f}d\mathrm{vol}_{\Sigma}
\end{equation}
for any $\varphi\in C^{\infty}_c(\Sigma)$.
\end{theorem}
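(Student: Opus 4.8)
The plan is to invoke Lemma \ref{lemmaweightedpoincare} with $\rho=\tfrac14(1+H^2)$: once we produce a single smooth positive function $g$ on $\Sigma$ with $\Delta_f g\le -\tfrac14(1+H^2)\,g$, the second inequality in \eqref{eqfpoincaretranslators} follows immediately, and the first is trivial from $1\le 1+H^2$. So the entire content of the theorem reduces to exhibiting the right supersolution, and I claim that $g=e^{f/2}$ does the job.

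First I would set $w=\langle x,\bar E_{m+1}\rangle=-f$ and collect the two needed identities. From the last formula in Lemma \ref{LemBasEq} applied with $i=m+1$ we get $\Delta_f w=\langle \bar E_{m+1},\bar E_{m+1}\rangle=1$. For the gradient, decompose $\bar E_{m+1}=E_{m+1}+\langle\bar E_{m+1},\nu\rangle\nu$; since $\nabla w=E_{m+1}$ and, by the translator equation \eqref{T}, $H^2=\langle\bar E_{m+1},\nu\rangle^2$, we obtain $|\nabla w|^2=|E_{m+1}|^2=1-\langle\bar E_{m+1},\nu\rangle^2=1-H^2$. Next, for a constant $c$ I would compute $\Delta_f(e^{cw})$ via the chain rule $\Delta_f(\phi(h))=\phi'(h)\Delta_f h+\phi''(h)|\nabla h|^2$, which gives $\Delta_f(e^{cw})=e^{cw}\bigl(c^2|\nabla w|^2+c\,\Delta_f w\bigr)=e^{cw}\bigl(c^2(1-H^2)+c\bigr)$.

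The key observation is that the choice $c=-\tfrac12$ turns this into an exact identity: $c^2(1-H^2)+c=\tfrac14(1-H^2)-\tfrac12=-\tfrac14(1+H^2)$. Hence $g:=e^{-w/2}=e^{f/2}$, which is a globally defined smooth positive function on $\Sigma$, satisfies $\Delta_f g=-\tfrac14(1+H^2)\,g$. Plugging $g$ and $\rho=\tfrac14(1+H^2)\ge 0$ into Lemma \ref{lemmaweightedpoincare} yields $\tfrac14\int_\Sigma(1+H^2)\varphi^2 e^{-f}d\mathrm{vol}_\Sigma\le\int_\Sigma|\nabla\varphi|^2 e^{-f}d\mathrm{vol}_\Sigma$ for all $\varphi\in C^\infty_c(\Sigma)$, and bounding $1+H^2\ge 1$ from below gives \eqref{eqfpoincaretranslators}.

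There is no serious obstacle here beyond spotting the test function $e^{f/2}$; the only subtlety worth emphasizing is that $c=-\tfrac12$ is precisely the value for which $c^2(1-H^2)+c$ collapses to $-\tfrac14(1+H^2)$ \emph{pointwise}, so that one genuinely has a global positive supersolution on all of $\Sigma$ and Lemma \ref{lemmaweightedpoincare} applies verbatim. (Completeness of $\Sigma$ is not used in this statement.)
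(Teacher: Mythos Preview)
Your proof is correct and essentially identical to the paper's: both set $g=e^{-\langle x,\bar E_{m+1}\rangle/2}=e^{f/2}$, verify via Lemma~\ref{LemBasEq} that $\Delta_f g=-\tfrac14(1+H^2)g$, and then apply Lemma~\ref{lemmaweightedpoincare} with $\rho=\tfrac14(1+H^2)$. The only difference is cosmetic---you explicitly derive $|\nabla w|^2=1-H^2$ and pass through the one-parameter family $e^{cw}$ before specializing to $c=-\tfrac12$, whereas the paper writes the computation directly for $\alpha(t)=e^{-t/2}$.
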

\begin{proof}
Set $h=\langle x,E_{m+1}\rangle$ and let $\alpha:\mathbb{R}\to\mathbb{R}$, $\alpha(t)=e^{-t/2}$. By Lemma \ref{LemBasEq} we have that $\Delta_f h=1$ and
\begin{align*}
\Delta_f (\alpha\circ h)&=\alpha'(h)\Delta_f h+\alpha''(h)|\nabla h|^2\\
&= -\frac{1}{2}e^{-h/2}+\frac{1}{4}e^{-h/2}(1-H^2)\\
&=-\frac{1}{4}e^{-h/2}-\frac{1}{4}e^{-h/2}H^2\\
&= -\frac{1}{4}(1+H^2)(\alpha\circ h).
\end{align*}
The conclusion follows now by Lemma \ref{lemmaweightedpoincare} with the choices $g=\alpha\circ h$, $\rho=(1+H^2)/4$.
\end{proof}

On the other hand, a direct argument yields the following

\begin{proposition}\label{InfVolTransl}
Let $x:\Sigma^{m\geq 2}\to\mathbb{R}^{m+1}$ be a complete translator. Then every end of $\Sigma$ has infinite $f$-volume. 
\end{proposition}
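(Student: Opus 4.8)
The plan is to argue by contradiction: suppose some end $E$ of the translator $\Sigma$ has finite $f$-volume, i.e. $\int_E e^{-f}d\mathrm{vol}_\Sigma<+\infty$. The strategy is to exploit the function $h=\langle x,E_{m+1}\rangle=\langle x,\bar E_{m+1}\rangle = -f$, which by Lemma \ref{LemBasEq} satisfies $\Delta_f h=1$, so $h$ is a globally defined function with constant positive $f$-Laplacian. On a translator the weighted manifold $\Sigma_f$ is $f$-parabolic when it has finite $f$-volume, but here finiteness is only assumed on one end, so the cleaner route is to integrate $\Delta_f h = 1$ against a suitable cutoff supported on the end and derive a contradiction from the finiteness of $\int_E e^{-f}d\mathrm{vol}_\Sigma$.

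Concretely, first I would fix a compact exhaustion and choose logarithmic cutoff functions $\varphi$ supported in $E$, equal to $1$ on a large portion of the end and with $\int_\Sigma |\nabla\varphi|^2 e^{-f}d\mathrm{vol}_\Sigma$ controlled by the $f$-volume of the transition region (hence small, since the $f$-volume of $E$ is finite). Testing the identity $\Delta_f h=1$ one gets, after integration by parts,
\[
\int_\Sigma \varphi^2 e^{-f}d\mathrm{vol}_\Sigma = \int_\Sigma \varphi^2 (\Delta_f h)\, e^{-f}d\mathrm{vol}_\Sigma = -2\int_\Sigma \varphi\,\langle\nabla\varphi,\nabla h\rangle\, e^{-f}d\mathrm{vol}_\Sigma,
\]
and since $|\nabla h|\le 1$ (because $\bar E_{m+1}$ is unit length, so $|\nabla h|^2 = 1 - \langle\bar E_{m+1},\nu\rangle^2\le 1$), Cauchy–Schwarz gives
\[
\int_\Sigma \varphi^2 e^{-f}d\mathrm{vol}_\Sigma \le 2\Big(\int_\Sigma \varphi^2 e^{-f}d\mathrm{vol}_\Sigma\Big)^{1/2}\Big(\int_\Sigma |\nabla\varphi|^2 e^{-f}d\mathrm{vol}_\Sigma\Big)^{1/2},
\]
so $\int_\Sigma\varphi^2 e^{-f}d\mathrm{vol}_\Sigma \le 4\int_\Sigma|\nabla\varphi|^2 e^{-f}d\mathrm{vol}_\Sigma$. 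Letting the cutoffs increase to $\chi_E$, the right-hand side tends to $0$ (finite $f$-volume of the end forces the Dirichlet integral of the optimal cutoffs to $0$), while the left-hand side tends to $\mathrm{vol}_f(E)>0$, a contradiction.

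The main obstacle I anticipate is making the cutoff argument rigorous when the end is not assumed to be of any special geometric type: one must produce a sequence $\varphi_j\in C^\infty_c(\Sigma)$ supported in $E$, with $\varphi_j\to 1$ pointwise on $E$ and $\int_\Sigma|\nabla\varphi_j|^2 e^{-f}d\mathrm{vol}_\Sigma\to 0$. This is exactly the statement that an end of finite $f$-volume is $f$-parabolic in the appropriate (Dirichlet-capacity) sense; the standard way is a Grigor'yan–type construction using the distance function, or simply invoking that finite weighted volume implies $f$-parabolicity of the end. Once that lemma is in hand, the integration-by-parts computation above is routine and closes the argument; the same device (testing $\Delta_f h=1$) is the crux of the whole proof.
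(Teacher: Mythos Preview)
Your integration-by-parts identity and the bound $|\nabla h|\le 1$ are both correct, and the resulting inequality $\int_\Sigma\varphi^2 e^{-f}\le 4\int_\Sigma|\nabla\varphi|^2 e^{-f}$ is valid for every $\varphi\in C^\infty_c(\Sigma)$ (it is in fact the constant-$\rho$ case of the $f$-Poincar\'e inequality in Theorem~\ref{Thmfpoincaretranslato}). The gap is in the last step, where you claim that finite $f$-volume of the end furnishes cutoffs $\varphi_j\in C^\infty_c(\Sigma)$ \emph{supported in $E$} (hence vanishing on $\partial E$) with $\varphi_j\to\chi_E$ and $\int|\nabla\varphi_j|^2 e^{-f}\to 0$. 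This is \emph{not} what $f$-parabolicity of a finite-volume end provides. Parabolicity gives cutoffs that are equal to $1$ on (a neighbourhood of) $\partial E$ and vanish near infinity, with small Dirichlet energy; it says the $f$-capacity of $\partial E$ relative to infinity is zero. What you need is the opposite orientation: zero on $\partial E$ and $1$ at infinity. But $\partial E$ is a compact hypersurface, and the transition from $0$ on $\partial E$ to $1$ just inside has a fixed positive Dirichlet-energy cost (the relevant capacity is positive), so the right-hand side cannot be driven to $0$.

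If instead you use the correct parabolicity cutoffs $\psi_R$ (equal to $1$ on $E\cap B_R$, vanishing outside $E\cap B_{2R}$, with $\int_E|\nabla\psi_R|^2 e^{-f}\le R^{-2}\mathrm{vol}_f(E)\to 0$), the integration by parts must be performed on $E$ and picks up the boundary term $\int_{\partial E}\langle\nabla h,\nu_E\rangle e^{-f}$. Passing to the limit you only recover the identity $\mathrm{vol}_f(E)=\int_{\partial E}\langle\nabla h,\nu_E\rangle e^{-f}$, which is finite and yields no contradiction. So the scheme, as written, does not close.

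The paper takes a direct route instead: it computes $\mathrm{div}\big(\varphi^2 e^{x_{m+1}/2}\nabla e^{x_{m+1}/2}\big)=\tfrac{1}{2}\varphi^2 e^{-f}+\varphi e^{-f}\langle\nabla\varphi,E_{m+1}\rangle$, integrates on $E_{R+\varepsilon}=E\cap B_{R+\varepsilon}^\Sigma$ with a cutoff supported there, and after letting $\varepsilon\to 0$ obtains the differential inequality
\[
\frac{d}{dR}\mathrm{vol}_f(E_R)\;\ge\;\frac{1}{2C}\,\mathrm{vol}_f(E_R)-\frac{1}{2C}\,\mathrm{vol}_f(\partial E),
\]
which forces $\mathrm{vol}_f(E_R)$ to grow exponentially in $R$. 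The point is that the boundary contribution on $\partial E$ is kept as a fixed constant and is beaten by the linear term $\tfrac{1}{2C}\mathrm{vol}_f(E_R)$; your approach, by contrast, tries to make the boundary disappear and runs into the capacity obstruction above.
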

\begin{proof}
Let $o$ be a reference point on $\Sigma$ and let $E\subseteq \Sigma\backslash B^\Sigma_{R_0}(o)$ be an end with respect to the compact set $B^\Sigma_{R_0}(o)$. Given $R>R_{0}$, denote by $E_R=E\cap B^\Sigma_{R}(o)$. Given $\varepsilon>0$, let $\varphi$ be a smooth cut-off function satisfying
\[
\varphi=\begin{cases}
1 & \textrm{on } E_R\\
0 & \textrm{on } E \setminus E_{R+\varepsilon}
\end{cases},
\quad |\nabla\varphi|\leq \frac{C}{\varepsilon},
\]
where $C$ is a constant that does not depend on $R$.
It follows by Lemma \ref{LemBasEq} that
\begin{align*}
\mathrm{div}(\varphi^2e^{\frac{x_3}{2}}\nabla e^{\frac{x_3}{2}})&=\varphi^2 e^{\frac{x_3}{2}}\Delta e^{\frac{x_3}{2}}+\varphi^2 |\nabla e^{\frac{x_3}{2}}|^2+e^{x_3}\varphi\langle \nabla \varphi, E_{3}\rangle\\
&=\varphi^2\frac{1+H^2}{4}e^{x_3}+\varphi^2\frac{1-H^2}{4}e^{x_3}+e^{x_3}\varphi\langle \nabla \varphi, E_{3}\rangle\\
&=\frac{\varphi^2}{2}e^{x_3}+e^{x_3}\varphi\langle \nabla \varphi, E_{3}\rangle.
\end{align*}
Integrating the previous identity and denoting by $\nu_{E}$ the outward pointing unit normal to $\partial E$, we get
\begin{align*}
\int_{\partial E} \frac{1}{2}\langle E_3,\nu_E\rangle e^{x_3}d\mathrm{vol}_{m-1}&=\int_{E_{R+\varepsilon}}\left(\frac{\varphi^2}{2}+\varphi\langle \nabla \varphi, E_{3}\rangle\right)e^{x_3}d\mathrm{vol}\\
&\geq \frac12 \mathrm{vol}_f(E_R)-\frac{C}{\varepsilon}\mathrm{vol}_f(E_{R+\varepsilon}\backslash E_R)
\end{align*}
Letting $\varepsilon\rightarrow 0$ and using the coarea formula we thus obtain
\[
\frac{d\mathrm{vol}_f(E_R)}{dR}-\frac{1}{2C}\mathrm{vol}_f(E_R)+\frac{1}{2C} \mathrm{vol}_f(\partial E)\geq0.
\]
Let $w(R)$ be the unique solution of the Cauchy problem
\[
\begin{cases}
\frac{dw(R)}{dR}-\frac{1}{2C}w(R)+\frac{1}{2C} \mathrm{vol}_f(\partial E)=0,\\
w(R_0)=\mathrm{vol}_f(E_{R_0}).
\end{cases}
\]
 Then
\[
\mathrm{vol}_f(E_R)\geq w(R)=A(R_0,\mathrm{vol}_f(\partial{E}),C)e^{\frac{R}{2C}}+\mathrm{vol}_f(\partial E),
\]
for some constant $A$ depending on $R_{0}, \mathrm{vol}_{f}(\partial E)$ and $C$. Letting $R\rightarrow \infty$ in the previous inequality we obtain that the end $E$ must have infinite $f$-volume. 
\end{proof}

\begin{remark}\label{RmkEndsNonfPar}
\rm{Recall that an end $E$ of a weighted manifold $M_f$ with respect to a relatively compact domain $\Omega$ is said to be non-$f$-parabolic if there exists a non-constant function $u$ satisfying
\[
\begin{cases}
\Delta_f u\geq 0 & \textrm{on }E\\
\sup_E u<+\infty\\
\frac{\partial u}{\partial \nu_E}\leq 0.
\end{cases}
\]
The end is said to be $f$-parabolic otherwise. By \cite[Lemma 1]{IR_fMin}\label{f-CSZ} (with $\alpha=0$) we know that every end of a weighted manifold satisfying an $f$-Poincar\'e inequality is either non-$f$-parabolic or it has finite $f$-volume. Thanks to Proposition \ref{InfVolTransl}, we can hence conclude that every end of a complete translator  $x:\Sigma^{m\geq 2}\to\mathbb{R}^{m+1}$ is non-$f$-parabolic.}
\end{remark}

Using Theorem \ref{Thmfpoincaretranslato} and Proposition \ref{InfVolTransl} we are able to prove Theorem \ref{ThmPoincareTopologyTranslators}.

\begin{proof}[Proof of Theorem \ref{ThmPoincareTopologyTranslators}]
We argue by contradiction and assume that either $\Sigma$ has at least two ends or it admits a codimension one cycle that does not disconnect it. We reason as in the proof of Theorem \ref{MainA}, with the choices $\rho=\frac{1+H^2}{4}$, $v>0$ as in Theorem \ref{MainA}, $a=-\|A\|^2$, and $u=|\omega|\in\mathcal{H}^1_f(\Sigma)$. Letting $\delta=1$ we can then apply part $(ii)$ of Lemma \ref{Liouville}, to obtain that $v=|\omega|$ and that the inequalities \eqref{ineq_deltav} and \eqref{ineq_u} must hold with equality sign. However, this implies that
\[
-|A\omega^\sharp|^2=\mathrm{Ric}^\Sigma_f(\omega^\sharp,\omega^\sharp)=-\|A\|^2|\omega|^2.
\]
Fix an arbitrary point $p\in \Sigma$ and let $\{e_{i}(p)\}_{i=1}^{m}$ be an orthonormal basis for $T_{p}\Sigma$ such that $e_{1}(p)=\frac{\omega^\sharp}{|\omega|}(p)$. Since, at $p$, 
\[
\ \|A\|^2=|Ae_1|^2+\sum_{i=2}^{m}|Ae_{i}|^2,
\]
we get that
\[
\ \sum_{i=2}^{m}|Ae_{i}|^2=0,
\]
and this implies that $\|A\|^2(p)=H^2(p)$. Since $p$ is arbitrary this yields that $\|A\|^2=H^2$ on $\Sigma$. In particular, as a consequence of Gauss' equation, the scalar curvature of $\Sigma$ vanishes identically. Using Corollary 2.4 in \cite{MSHS} we conclude that $\Sigma$ has to be either a member of the tilted grim reaper family or a translator hyperplane. However all these hypersurfaces have only one end and do not admit any codimension one cycle that does not disconnect them.

As for the last part of the statement, let us assume that the translator is $f$-stable, that is, the operator $L_{f}$ defined above is nonnegative. Then by \cite{FCS} (see also \cite{IR_fMin}) there exists $v>0$ satisfying \eqref{ineq_deltav} with $\rho=a=-\|A\|^2$ and $\delta=1$ and we can reason exactly as above.
\end{proof}

\section{Poincar\'e inequality and topological rigidity of self-expanders under curvature assumptions}\label{SecPoincareExpanders}

In the following lemma we collect some basic equations valid on any self-expander.
\begin{lemma}\label{BasEqSE}
Let $x:\Sigma^{m\geq 2}\to\mathbb{R}^{m+1}$ be a self-expander of the mean curvature flow and let $f=-|x|^2/2$. Then
\begin{enumerate}
\item $\nabla f=-x+H\nu$;
\item $\Delta_f f=-m+2f$;
\item $\mathrm{Ric}^\Sigma_f\geq -1-\|A\|^2$.
\end{enumerate}
\end{lemma}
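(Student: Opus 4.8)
The plan is to prove Lemma \ref{BasEqSE} by a direct computation using the defining equation $\mathbf{H}=x^{\perp}$ together with the standard structure equations of an isometric immersion. Throughout I write $h=\langle x,\nu\rangle$ so that $x^{\perp}=h\nu$ and hence $\mathbf{H}=h\nu$; since $\mathbf{H}=-H\nu$ (with the sign convention implicit in the paper's formula $\mathbf{H}=\bar V^{\perp}$ for translators), or more simply treating $H$ as the scalar mean curvature normalized so that $\mathbf{H}=H\nu$, one has $H=h=\langle x,\nu\rangle$. I would state this normalization explicitly at the start of the proof so the signs in (1)--(3) come out consistently.

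For part (1): decompose the position vector as $x=x^{\top}+x^{\perp}=\nabla\left(\tfrac{|x|^2}{2}\right)+h\nu$, where I use that the tangential part of $x$ is the gradient of $\tfrac12|x|^2$ restricted to $\Sigma$ (since $\bar\nabla\tfrac12|x|^2=x$). Because $f=-\tfrac12|x|^2$, this gives $\nabla f=-x^{\top}=-(x-h\nu)=-x+H\nu$, which is (1). For part (2): I would compute $\Delta_f f = \Delta f - \langle\nabla f,\nabla f\rangle$. The term $\Delta f = -\Delta\left(\tfrac12|x|^2\right) = -\sum_i\langle\bar\nabla_{e_i}x,e_i\rangle - \langle x,\Delta x\rangle/1$; more cleanly, $\Delta_\Sigma\left(\tfrac12|x|^2\right) = m + \langle x,\mathbf{H}\rangle = m + hH = m+H^2$ using the Gauss formula $\Delta_\Sigma x=\mathbf{H}$ and $\sum_i|e_i|^2=m$. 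Hence $\Delta f = -m-H^2$. Then $|\nabla f|^2 = |x^{\top}|^2 = |x|^2 - h^2 = -2f - H^2$. Combining, $\Delta_f f = (-m-H^2) - (-2f-H^2) = -m+2f$, which is (2). The bookkeeping with the $H^2$ terms cancelling is the only mildly delicate point and I would double-check the immersion formulas there.

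For part (3): this is the analogue of the Ricci computation for translators mentioned right before Theorem \ref{Thmfpoincaretranslato}, and the route is the Gauss equation plus the Hessian of $f$. From Gauss' equation, $\mathrm{Ric}^\Sigma(X,X) = H\langle AX,X\rangle - |AX|^2$ (with the appropriate normalization of $H$), so $\mathrm{Ric}^\Sigma \geq H\langle AX,X\rangle - \|A\|^2|X|^2 \geq -\tfrac{H^2}{4}|X|^2 - \|A\|^2|X|^2$ after completing the square, or more crudely $\mathrm{Ric}^\Sigma\ge -\|A\|^2\|X\|^2$ if one is content with that; but since we also need $\mathrm{Hess}(f)$, I would instead compute $\mathrm{Hess}(f)$ directly. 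Differentiating $\nabla f=-x+H\nu$ and projecting tangentially: $\mathrm{Hess}(f)(X,Y) = \langle\nabla_X\nabla f,Y\rangle = \langle\bar\nabla_X(-x+H\nu),Y\rangle^{\top} = -\langle X,Y\rangle + H\langle\bar\nabla_X\nu,Y\rangle = -\langle X,Y\rangle - H\langle AX,Y\rangle$ (sign depending on the convention for $A$). Then $\mathrm{Ric}_f = \mathrm{Ric}^\Sigma + \mathrm{Hess}(f)$ and one combines: $\mathrm{Ric}_f(X,X) = \mathrm{Ric}^\Sigma(X,X) - |X|^2 - H\langle AX,X\rangle$. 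Plugging the Gauss identity $\mathrm{Ric}^\Sigma(X,X) = H\langle AX,X\rangle - |AX|^2$, the two $H\langle AX,X\rangle$ terms cancel, leaving $\mathrm{Ric}_f(X,X) = -|X|^2 - |AX|^2 \geq -(1+\|A\|^2)|X|^2$, which is (3). The main obstacle — really just a matter of care rather than difficulty — is keeping all the sign conventions ($A$ vs.\ $-A$, $\mathbf{H}=H\nu$ vs.\ $-H\nu$, Hessian convention) consistent so that the cancellations in (2) and (3) happen as stated; once the conventions are fixed in accord with Lemma \ref{LemBasEq} and the paper's translator computation, everything is a two-line calculation.
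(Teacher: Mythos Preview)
Your proof is correct and follows essentially the same route as the paper: tangential projection of $x$ for (1), the identity $\Delta_\Sigma\tfrac{|x|^2}{2}=m+H^2$ together with $|\nabla f|^2=|x|^2-H^2$ for (2), and the Gauss equation combined with the explicit Hessian $\mathrm{Hess}\,f(X,Y)=-\langle X,Y\rangle-H\langle AX,Y\rangle$ so that the $H\langle AX,X\rangle$ terms cancel in (3). The only cosmetic difference is that the paper obtains $\Delta f$ by first computing $\mathrm{Hess}\,f$ and tracing, while you appeal to $\Delta_\Sigma x=\mathbf{H}$ directly; either way is a one-line calculation.
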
 
\begin{proof}
Let $\Sigma$ be a self-expander for the mean curvature flow. Then, 
\begin{align*}
\nabla f&=\nabla\left(-\frac{|x|^2}{2}\right)\\
&=-x^T=-(x-\langle x,\nu\rangle \nu).
\end{align*}
Point $(1)$ is a direct consequence of the previous computation and of the expander equation \eqref{SE}. As for point $(2)$, let $Y\in T\Sigma$. Then
\begin{align*}
\mathrm{Hess} f(Y,Y)&=\langle \nabla_Y \nabla f,Y\rangle\\
&=\langle \overline{\nabla}_Y \nabla f,Y\rangle\\
&=-\langle \overline{\nabla}_Y x,Y\rangle+H\langle \overline{\nabla}_Y \nu,Y\rangle\\
&=-|Y|^2-H\langle AY,Y\rangle,
\end{align*}
where we are denoting by $\overline{\nabla}$ the connection of the ambient space. Hence
\begin{align*}
\Delta_f f&=\Delta f-|\nabla f|^2\\
&=-m-H^2-(|x|^2-H^2)\\
&=-m+2f.
\end{align*}
Finally, as for point $(3)$, note that by Gauss' equation
\begin{align*}
    \mathrm{Ric}_f^\Sigma(Y,Y)&=\mathrm{Ric}^\Sigma(Y,Y)+\mathrm{Hess} f(Y,Y)\\
    &=H\langle AY,Y\rangle-|AY|^2-|Y|^2-H\langle AY,Y\rangle\\
    &\geq -(1+\|A\|^2)|Y|^2.    
\end{align*}
\end{proof}

The following two results hold.

\begin{proposition}[Theorem 1.2 in \cite{ChengZhou_Selfexp}]\label{propfpoincareexpanders}
Let $x:\Sigma^{m\geq 2}\to\mathbb{R}^{m+1}$ be a self-expander of the mean curvature flow and let $f=-|x|^2/2$. Then the following $f$-Poincar\'e inequality holds
\begin{equation}\label{eqfpoincareexpanders}
m\int_\Sigma \varphi^2 e^{-f}d\mathrm{vol}_{\Sigma}\leq \int_\Sigma (m+H^2) \varphi^2 e^{-f}d\mathrm{vol}_{\Sigma}\leq \int_\Sigma |\nabla \varphi|^2 e^{-f}d\mathrm{vol}_{\Sigma},
\end{equation}
for any $\varphi\in C^{\infty}_c(\Sigma)$.
\end{proposition}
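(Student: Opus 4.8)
The statement to prove is Proposition \ref{propfpoincareexpanders}, the $f$-Poincar\'e inequality for self-expanders with $f=-|x|^2/2$. The plan is to mimic exactly the strategy already used for translators in the previous section: produce a positive (or nonnegative, not identically zero) function $g$ on $\Sigma$ satisfying a differential inequality of the form $\Delta_f g \le -\rho g$ with $\rho = (m+H^2)/\lambda$ for a suitable constant $\lambda$, and then invoke Lemma \ref{lemmaweightedpoincare} to deduce the weighted Poincar\'e inequality with weight $\rho \ge m$, which gives both displayed inequalities at once.

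First I would look for the right test function built out of $f$ itself, using the identities collected in Lemma \ref{BasEqSE}, most importantly $\Delta_f f = -m + 2f$ together with $|\nabla f|^2 = |x^T|^2 = |x|^2 - H^2 = -2f - H^2$. As in the translator case, where one took $g = e^{-h/2}$ with $h = \langle x, E_{m+1}\rangle$ satisfying $\Delta_f h = 1$, here the natural candidate is $g = \alpha \circ f$ for a suitable real function $\alpha$ — plausibly $\alpha(t) = e^{t}$ or $\alpha(t) = e^{\beta t}$ for an appropriate $\beta$, or perhaps a function adapted to the quadratic nature of $f$. Using $\Delta_f(\alpha\circ f) = \alpha'(f)\Delta_f f + \alpha''(f)|\nabla f|^2 = \alpha'(f)(-m+2f) + \alpha''(f)(-2f - H^2)$, I would choose $\alpha$ so that the $f$-dependent terms cancel or combine favorably, leaving something of the form $-(\text{const})(m + H^2)(\alpha\circ f)$. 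If $\alpha$ is taken exponential, $\alpha'' = \beta\alpha'$, and the bracket becomes $\alpha'(f)\big[(-m+2f) - \beta(2f + H^2)\big]$; choosing $\beta = 1$ kills the $f$-linear terms and yields $\alpha'(f)(-m - H^2) = \alpha(f)(-m - H^2)$ when $\alpha(t) = e^{t}$, i.e. $\Delta_f g = -(m+H^2)g$ with $g = e^{f} = e^{-|x|^2/2} > 0$. This is exactly the desired inequality with equality, and $\rho = m + H^2 \ge m > 0$.

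Having established $\Delta_f g = -(m+H^2)g$ with $g = e^{-|x|^2/2}$ strictly positive, Lemma \ref{lemmaweightedpoincare} applies directly with $\rho = m + H^2$, giving
\[
\int_\Sigma (m+H^2)\varphi^2 e^{-f}d\mathrm{vol}_\Sigma \le \int_\Sigma |\nabla\varphi|^2 e^{-f}d\mathrm{vol}_\Sigma
\]
for all $\varphi \in C^\infty_c(\Sigma)$, and since $m + H^2 \ge m$ the first inequality in \eqref{eqfpoincareexpanders} is immediate. I do not anticipate a genuine obstacle here: the only delicate point is verifying the arithmetic of the cancellation, i.e. confirming that the correct choice is $g = e^{-|x|^2/2}$ (equivalently $\alpha(t)=e^t$) and that no sign error spoils $\Delta_f g + (m+H^2)g = 0$; one should double-check this against the known statement of Theorem 1.2 in \cite{ChengZhou_Selfexp}. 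Everything else is a routine application of the machinery already set up in Lemma \ref{lemmaweightedpoincare} and Lemma \ref{BasEqSE}.
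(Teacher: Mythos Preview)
Your proposal is correct and matches the paper's proof essentially verbatim: the paper also takes $g=e^{f}=e^{-|x|^2/2}$, computes $\Delta_f e^{f}=e^{f}(\Delta_f f+|\nabla f|^2)=-(m+H^2)e^{f}$ via Lemma~\ref{BasEqSE}, and then invokes Lemma~\ref{lemmaweightedpoincare} with $\rho=m+H^2$. Your cancellation arithmetic and the identity $|\nabla f|^2=-2f-H^2$ are exactly right.
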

\begin{proof}
By Lemma \ref{BasEqSE} we have that 
\begin{align*}
\Delta_f e^{f}=&e^f\left(\Delta_f f+|\nabla f|^2\right)\\
=& -e^f \left(m+H^2\right).
\end{align*}
The conclusion follows now by Lemma \ref{lemmaweightedpoincare} with the choices $g=e^f$, $\rho=m+H^2$.
\end{proof}

\begin{proposition}
Let $x:\Sigma^{m\geq 2}\to\mathbb{R}^{m+1}$ be a properly immersed self-expander of the mean curvature flow and let $f=-|x|^2/2$. Then any end of $\Sigma$ has infinite $f$-volume.
\end{proposition}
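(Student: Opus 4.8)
The plan is to combine the identity $\Delta_{f}(-f)=m+\abs{x}^{2}$ — immediate from Lemma \ref{BasEqSE}, since $\nabla f=-x^{T}$ and $\Delta_{f}f=-m+2f=-m-\abs{x}^{2}$ — with the fact that, for a \emph{proper} immersion, the sub\-level sets $D_{R}=\{p\in\Sigma:\abs{x(p)}<R\}$ are relatively compact. Working with $D_{R}$ rather than with intrinsic geodesic balls is the whole point: on the boundary of such a region the tangential position field $x^{T}$ points ``outward'', which produces a boundary term of the right sign.

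First I would fix an end $E$ of $\Sigma$. Since $x$ is proper, $\{\abs{x}\le t\}$ is compact for every $t>0$, so these sets form a compact exhaustion of $\Sigma$; hence, for a large regular value $R_{0}$ of $p\mapsto\abs{x(p)}^{2}$ (Sard), the given end is represented by a connected component — still denoted $E$ — of $\{\abs{x}>R_{0}\}$, and it suffices to prove that this $E$ has infinite $f$-volume. Here $\partial E$ is a smooth compact hypersurface inside $\{\abs{x}=R_{0}\}$ on which $x^{T}=\tfrac12\nabla\abs{x}^{2}$ does not vanish, and, $\abs{x}^{2}$ increasing into $E$, the outward unit conormal of $E$ along $\partial E$ is $-x^{T}/\abs{x^{T}}$. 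For a.e.\ $R>R_{0}$ I would then set $\Omega_{R}=E\cap D_{R}$, a compact manifold with boundary $\partial E\sqcup(E\cap\{\abs{x}=R\})$, and apply the divergence theorem to the vector field $e^{-f}x^{T}$ on $\Omega_{R}$. Using $\mathrm{div}(e^{-f}x^{T})=e^{-f}\Delta_{f}(-f)=(m+\abs{x}^{2})e^{-f}$, the contribution $-\int_{\partial E}\abs{x^{T}}e^{-f}\le 0$ of the inner boundary, and the bound $\abs{x^{T}}\le\abs{x}=R$ on $E\cap\{\abs{x}=R\}$, this gives
\[
m\,\mathrm{vol}_{f}(\Omega_{R})\ \le\ \int_{\Omega_{R}}(m+\abs{x}^{2})\,e^{-f}\,d\mathrm{vol}_{\Sigma}\ \le\ R\int_{E\cap\{\abs{x}=R\}}e^{-f}\,d\mathrm{vol}_{m-1}.
\]

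Then I would turn this into a differential inequality for $V(R):=\mathrm{vol}_{f}(\Omega_{R})$. Because the immersion is $1$-Lipschitz one has $\abs{\nabla\abs{x}}\le 1$, so the coarea formula applied to $\abs{x}$ shows that $V$ is locally absolutely continuous on $(R_{0},\infty)$ and $V'(R)=\int_{E\cap\{\abs{x}=R\}}e^{-f}/\abs{\nabla\abs{x}}\,d\mathrm{vol}_{m-1}\ge\int_{E\cap\{\abs{x}=R\}}e^{-f}\,d\mathrm{vol}_{m-1}$ for a.e.\ $R$. Combined with the previous display this yields $V'(R)/V(R)\ge m/R$ for a.e.\ $R>R_{0}$ — here $V(R)>0$ for every $R>R_{0}$, since $\partial E\ne\emptyset$ ($\Sigma$ being connected) forces $E$ to meet $D_{R}$ in a nonempty open set. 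Integrating gives $V(R)\ge V(R_{1})(R/R_{1})^{m}\to+\infty$, whence $\mathrm{vol}_{f}(E)=\lim_{R\to\infty}V(R)=+\infty$, which is the assertion. One could equally well replace the divergence theorem by a Lipschitz cut-off computation, exactly as in the proof of Proposition \ref{InfVolTransl}.

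The one step I expect to require genuine care is the sign of the boundary term along $\partial E$: this is precisely the reason for passing, via properness, from intrinsic balls to the sub\-level sets of $\abs{x}$. On $\{\abs{x}=R_{0}\}$ the outward conormal of $E$ is $-x^{T}/\abs{x^{T}}$, so $\langle x^{T},\nu\rangle=-\abs{x^{T}}\le 0$, whereas on an intrinsic geodesic sphere the analogous contribution is $-\langle x,\nabla r\rangle e^{-f}$, of indefinite sign, which would obstruct the argument. The remaining ingredients — Sard's theorem for the regularity of the level sets $\{\abs{x}^{2}=\mathrm{const}\}$, the coarea formula, and the elementary ODE comparison — are routine.
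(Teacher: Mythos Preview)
Your argument is correct, and it is genuinely different from the paper's.

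The paper does not argue directly. It first observes, via Lemma~\ref{BasEqSE}, that $e^{f/2}=e^{-|x|^2/4}$ is a positive $f$-superharmonic function, and then argues by contradiction: if some end $E$ had finite $f$-volume it would be $f$-parabolic (this uses Remark~\ref{RmkEndsNonfPar}, which in turn relies on the $f$-Poincar\'e inequality for self-expanders just established in Proposition~\ref{propfpoincareexpanders}), so the $f$-superharmonic function $e^{f/2}$ would attain its infimum over $E$ on $\partial E$. Properness forces $\sup_E|x|=+\infty$, hence $\inf_E e^{f/2}=0<\inf_{\partial E}e^{f/2}$, a contradiction.

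Your route is more elementary and more quantitative: you bypass both the Poincar\'e inequality and the parabolicity dichotomy, working instead with the divergence identity $\mathrm{div}(e^{-f}x^{T})=(m+|x|^{2})e^{-f}$ on the extrinsic sublevel sets $\{|x|<R\}$ (which properness makes compact), and you extract the polynomial lower bound $\mathrm{vol}_f(\Omega_R)\gtrsim R^{m}$. This mirrors the spirit of the translator argument in Proposition~\ref{InfVolTransl}, with one essential twist you correctly isolate: for self-expanders you must pass from intrinsic balls to the level sets of $|x|$ so that the inner boundary term has a sign, and this is exactly where properness enters. The trade-off is that the paper's proof is shorter once the Poincar\'e inequality is in hand, while yours is self-contained and yields an explicit growth rate.
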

\begin{proof}
Let $E$ be an end of $\Sigma$ with respect to a compact set $K$. Consider the function $e^{-\frac{|x|^2}{4}}=e^{\frac{f}{2}}$. A simple computation using Lemma \ref{BasEqSE} and \eqref{SE} shows that
\begin{align*}
\Delta_f e^{\frac{f}{2}}=&\frac{\Delta_{f}e^{f}}{2e^{\frac{f}{2}}}-\frac{|\nabla e^{f}|^2}{4e^{\frac{3f}{2}}}\\
&= e^{\frac{f}{2}} \left(-\frac{m}{2}-\frac{H^2}{2}-\frac{|x|^2}{4}+\frac{H^2}{4}\right)\\
&\leq 0.
\end{align*}
Hence $u=e^{\frac{f}{2}}$ is a positive $f$-superharmonic function. Assume by contradiction that $E$ has finite $f$-volume. Then, as a consequence of Remark \ref{RmkEndsNonfPar}, E must be $f$-parabolic and hence any $f$-superharmonic function bounded from below must attain its infimum on the boundary of $E$. On the other hand, since $\Sigma$ is properly immersed, $\sup_E |x|^2=+\infty$ and hence $\inf_E  u=0<\inf_{\partial E} u$, leading to a contradiction.
\end{proof}

With these tools, we can show, as promised, topological rigidity for self-expanders under curvature assumptions.

\begin{proof}[Proof of Theorem \ref{ThmPoincareTopologyExpanders}]
 We argue by contradiction and assume that $\Sigma$ has at least two ends and it admits a codimension one cycle that does not disconnect it. We use Gauss' equation and reason as in the proof of Theorem \ref{MainA}, with the choices $\rho=m+H^2$, $v>0$ as in Theorem \ref{MainA}, $a=-\|A\|^2-1$, $u=|\omega|\in\mathcal{H}^1_f(\Sigma)$. In this case we can choose $\delta=1$ and apply part $(ii)$ of Lemma \ref{Liouville}, to obtain that $v=|\omega|$ and that the inequalities \eqref{ineq_deltav} and \eqref{ineq_u} must hold with equality sign. However, this implies that
\[
-|\omega|^2-|A\omega^\sharp|^2=\mathrm{Ric}^\Sigma_f(\omega^\sharp,\omega^\sharp)=-|\omega|^2-\|A\|^2|\omega|^2.
\]
Fix an arbitrary point $p\in \Sigma$ and let $\{e_{i}(p)\}_{i=1}^{m}$ be an orthonormal basis for $T_{p}\Sigma$ such that $e_{1}(p)=\frac{\omega^\sharp}{|\omega|}(p)$. Hence at $p$, 
\[
\ \|A\|^2=|A(e_1)|^2+\sum_{i=2}^{m}|Ae_{i}|^2,
\]
from which we obtain
\[
\ \sum_{i=2}^{m}|Ae_{i}|^2=0.
\]
We thus deduce that $\|A\|^2(p)=H^2(p)$. Since $p$ is arbitrary this implies that $\|A\|^2=H^2$ on $\Sigma$. In particular, as a consequence of Gauss' equation, the scalar curvature of $\Sigma$ vanishes identically. Using Theorem 6.1 in \cite{AncariCheng_GeoDed} we conclude that $\Sigma=\Gamma\times\mathbb{R}^{m-1}$, where $\Gamma$ is a complete self-expander curve immersed in $\mathbb{R}^2$. However, this contradicts the assumption that either $\Sigma$ has at least two ends or that it admits a codimension one cycle that does not disconnect it.   
\end{proof}

\begin{remark}
\rm{In \cite[Corollary 1.1]{AncariCheng_GeoDed} it is proved that the condition on the scalar curvature $\mathrm{Scal}\geq -(m-1)$ implies $f$-stability of self-expanders. One could be tempted, also in this setting, to use directly the $f$-stability in order to obtain topological rigidity. However in this case
\[
\ \mathrm{Ric}_{f}\geq -1-\|A\|^2,
\]
while the $f$-stability assumption could be phrased, through \cite{FCS}, by the existence of $v>0$ such that
\[
\Delta_{f}v+(\|A\|^2-1)v=0.
\]
Hence one immediately realises that it is not possible to apply the vanishing theorem to conclude.}
\end{remark}


\appendix
\section{A note about theorems C and D in \cite{ImperaRimoldi_Transl}}

This appendix is devoted to a comparison of the approach taken in the present paper with that of our previous papers \cite{ImperaRimoldi_Transl} and \cite{IR_MathZ}, where we established estimates on the $f$-index of translators via topology. Moreover, we explicitly note that the approach via Poincar\'e inequality permits to fix a gap in the proof of Theorem C and Theorem D in \cite{ImperaRimoldi_Transl} (see Appendix A in \cite{IR_MathZ}) and to get a proof of the full statement of these results without any additional assumption.

Recall that the $f$-index of a translator $x:\Sigma^{m}\to\mathbb{R}^{m+1}$ is defined in terms of the generalized Morse index of $L_{f}$ on $\Sigma$. Namely, given a relatively compact domain $\Omega\Subset\Sigma$ we define
\[
\ \mathrm{Ind}^{L_{f}}(\Omega)=\sharp\left\{\mathrm{negative\,eigenvalues\,of\,}L_{f}\,\mathrm{on\,} C_{0}^{\infty}(\Omega)\right\}.
\]
The $f$-index of $\Sigma$ is then defined as
\[
\ \mathrm{Ind}_{f}(\Sigma)\doteq\mathrm{Ind}^{L_{f}}(\Sigma)=\sup_{\Omega\Subset\Sigma}\mathrm{Ind}^{L_{f}}(\Omega).
\]

If $\mathrm{Ind}_{f}(\Sigma)<\infty$ we note that it is a consequence of \cite[Theorem 3]{IR_fMin} (see also the considerations before Corollary 1 in \cite{IR_fMin}) that $\mathrm{dim}\left(\mathcal{H}^{1}_{f}(\Sigma)\right)<\infty$.

In \cite{ImperaRimoldi_Transl} and \cite{IR_MathZ}, we used an adaptation to the weighted setting of Li-Tam's theory, \cite{LiTam}, that allows to control the number of non-$f$-parabolic ends in terms of the dimension of the space of bounded $f$-harmonic functions with finite Dirichlet energy. This latter in turn is controlled from above by $\mathrm{dim}(\mathcal{H}^{1}_{f}(\Sigma))$. Hence, in order to obtain a control of the $f$-index via the topology at infinity we used the validity of an $L^{2,f}$ Sobolev inequality (which we are able to guarantee assuming an upper halfspace confinement for $x(\Sigma)$ and the dimensional restriction $m\geq 3$)  in order to prove that all the ends are in fact non-$f$-parabolic. 
\medskip

In the present paper, the validity of the $f$-Poincar\'e inequality and the fact that all the ends have infinite $f$-volume permit us to bypass Li-Tam's theory, thus working directly with the first cohomology group with compact support. This in particular permits to avoid the upper half-space assumption and the restriction on the dimension in both the qualitative index estimate (Theorems C and D in \cite{ImperaRimoldi_Transl}; see also the remarks in Appendix A in \cite{IR_MathZ}) and the quantitative estimate for the generalized index (Theorem A in \cite{IR_MathZ}). In particular, as for the unstable case, we are able to prove the following

\begin{proposition}
Let $x:\Sigma^{m\geq 2}\to\mathbb{R}^{m+1}$ be a complete translator. If $\Sigma$ has finite $f$-index then it has finitely many ends. Moroever if $\|A\|^2\in L^{\infty}(\Sigma)$ then
\[
\ \mathrm{Ind}_{f}(\Sigma)+\mathrm{Null}_{f}(\Sigma)\geq \frac{2}{m(m+1)}\mathrm{dim}(\mathcal{H}^{1}_{f}(\Sigma))\geq \frac{2}{m(m+1)}(\sharp\left\{ends\right\}-1,)
\]
where $\mathrm{Null}_{f}(\Sigma)$ is the dimension of the space of Jacobi functions which are square integrable with respect to the weighted measure.
\end{proposition}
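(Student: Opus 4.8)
The plan is to bootstrap the topological rigidity already established for translators (via the $f$-Poincar\'e inequality of Theorem \ref{Thmfpoincaretranslato} and the infinite $f$-volume of all ends, Proposition \ref{InfVolTransl}) together with the finite-index hypothesis and the index estimates machinery from \cite{IR_fMin}. First I would record that, by \cite[Theorem 3]{IR_fMin}, finite $f$-index of $\Sigma$ forces $\dim\big(\mathcal{H}^1_f(\Sigma)\big)<\infty$; this is the quantity that will control the topology at infinity. Next, and this is where the present approach diverges from \cite{ImperaRimoldi_Transl}, I would invoke the injectivity of the map $i\colon H^1_c(\Sigma)\to H^1_f(\Sigma)\simeq\mathcal{H}^1_f(\Sigma)$ proved inside Lemma \ref{nontrivialfharmonic1forms}: since every end of a translator has infinite $f$-volume (Proposition \ref{InfVolTransl}) and the $f$-Poincar\'e inequality \eqref{eqfpoincaretranslators} holds with $\rho=(1+H^2)/4$ (which satisfies the non-integrability condition \eqref{Nonintrho} precisely because each end has infinite $f$-volume and $H^2\geq 0$), the argument of that lemma gives $\dim H^1_c(\Sigma)\leq\dim\mathcal{H}^1_f(\Sigma)<\infty$. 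Combining with part (i) of Proposition \ref{propH1candtopology}, namely $\dim H^1_c(\Sigma)\geq \sharp\{\text{ends}\}-1$, yields at once that $\Sigma$ has finitely many ends, and in fact $\sharp\{\text{ends}\}-1\leq\dim\mathcal{H}^1_f(\Sigma)$.

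For the quantitative inequality under the additional bound $\|A\|^2\in L^\infty(\Sigma)$, the plan is to use the eigenvalue-counting estimate relating the generalized Morse index of the weighted Jacobi operator $L_f=-\Delta_f-\|A\|^2$ to the dimension of the space of $L^{2,f}$ harmonic one-forms. Concretely, each $\omega\in\mathcal{H}^1_f(\Sigma)$ produces, via the $f$-Weitzenb\"ock/Kato estimate exactly as in the proof of Theorem \ref{MainA}, a function $u=|\omega|$ satisfying $\Delta_f u+\|A\|^2 u\geq 0$ weakly (using $\mathrm{Ric}^\Sigma_f\geq-\|A\|^2$), i.e. $u$ is a weak $L^{2,f}$-solution of $L_f u\leq 0$; the boundedness of $\|A\|^2$ is what makes this into a genuine statement about the Morse index. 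The counting argument of \cite[Theorem 3]{IR_fMin} (essentially: harmonic one-forms restrict, on small balls, to at most $\binom{m+1}{2}=m(m+1)/2$ independent "directions," so that a lower bound on $\dim\mathcal{H}^1_f$ forces a proportional lower bound on the index plus nullity) then gives
\[
\mathrm{Ind}_f(\Sigma)+\mathrm{Null}_f(\Sigma)\ \geq\ \frac{2}{m(m+1)}\dim\big(\mathcal{H}^1_f(\Sigma)\big),
\]
and chaining this with $\dim\mathcal{H}^1_f(\Sigma)\geq\sharp\{\text{ends}\}-1$ from the previous paragraph completes the proof.

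The main obstacle I expect is not the topological half — that is a clean repackaging of Lemma \ref{nontrivialfharmonic1forms} and Proposition \ref{propH1candtopology} once infinite $f$-volume of ends is in hand — but the careful invocation of the index-counting theorem from \cite{IR_fMin}: one must check that its hypotheses are met in the weighted translator setting (completeness, the Bakry-\'Emery lower bound $\mathrm{Ric}^\Sigma_f\geq-\|A\|^2$ of the form needed, and crucially the $L^\infty$ bound on $\|A\|^2$, which is what converts "$L^{2,f}$ harmonic one-form" into "test function contributing to the generalized index of $L_f$"), and that the combinatorial constant $\frac{2}{m(m+1)}$ is exactly the one produced by that theorem. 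A secondary point to be careful about is that the finiteness $\dim\mathcal{H}^1_f(\Sigma)<\infty$ used in the first paragraph already follows from finite $f$-index \emph{without} the curvature bound, so the two conclusions of the Proposition are correctly separated.
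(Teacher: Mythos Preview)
Your proposal is correct and follows essentially the approach the paper indicates (the paper does not give a detailed proof of this Proposition; the argument is sketched in the paragraphs preceding it in the appendix). The only minor discrepancy is bibliographic: for the quantitative index-counting inequality with the constant $\tfrac{2}{m(m+1)}$, the paper points to Theorem~A in \cite{IR_MathZ} rather than to \cite[Theorem~3]{IR_fMin}, the latter being cited only for the qualitative finiteness $\dim\mathcal{H}^1_f(\Sigma)<\infty$.
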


\begin{remark}
\rm{Since, by Remark \ref{RmkEndsNonfPar} all the ends of a translator are actually non-$f$-parabolic, one could also proceed as in \cite{ImperaRimoldi_Transl}, and through an adaptation to the weighted setting of Li-Tam's theory get to the same conclusion in a less direct way.}
\end{remark}

\bigskip

\bibliographystyle{amsplain}
\bibliography{WeightedPoincare}
 
\end{document}